\title[Categories of  complex variations of Hodge structure]{Categories of  complex variations of Hodge structure over compact K\"ahler manifolds}
\author{Hisashi Kasuya}
\theoremstyle{plain}
\theoremstyle{plain}
\theoremstyle{plain}
\theoremstyle{plain}
\theoremstyle{plain}
\newtheorem{thm}{Theorem}[subsection] 
\theoremstyle{remark}
\newtheorem{remark}[thm]{Remark}
\theoremstyle{remark}
\newtheorem{Important note}[thm]{Important note}
\theoremstyle{Main result}
\newtheorem{main result}{Main result}
\theoremstyle{lemma}
\newtheorem{lemma}[thm]{Lemma}
\theoremstyle{definition}
\theoremstyle{proposition}
\newtheorem{prop}[thm]{Proposition}
\theoremstyle{corollary}
\newtheorem{cor}[thm]{Corollary}
\theoremstyle{remark}
\newtheorem{example}[thm]{Example}
\theoremstyle{plain}
\theoremstyle{problem}
\theoremstyle{conclusion}
\newtheorem*{cond(V)}{Condition (V)}
\address[Hisashi Kasuya]{
Department of Mathematics, Graduate School of Science, Osaka University, Osaka,  Japan. }
\email{kasuya@math.sci.osaka-u.ac.jp}
\keywords{complex Hodge theory, complex variation of Hodge structure, non-abelian Hodge theory, Tannakian category}
\subjclass[2010]{Primary:14D07,18M25,  32L10, 53C07}
\newcommand{\C}{\mathbb{C}}
\newcommand{\Z}{\mathbb{Z}}
\begin{document} 

\maketitle
\begin{abstract}
We give a complex polarized variation of Hodge structure over  a compact K\"ahler manifold  $M$ which controls all finite-dimensional  complex polarized variations of Hodge structure over $M$ and their tensor relations.
As a corollary, we obtain  the cohomology algebra with values in a local system admitting multiplicative Hodge structures.

\end{abstract}

\section{Introduction}
The purpose of this paper is to study  structures of the  category $\mathcal{VHS}_{\C}(M)$ of (finite-dimensional) complex polarized variations of Hodge structure over a compact K\"ahler manifold $M$ related to representation theory and cohomology theory.
By using non-abelian Hodge correspondence,  in \cite{Sim},  Simpson shows that objects in $\mathcal{VHS}_{\C}(M)$ are characterized by following objects:
\begin{description}
\item[(S1)] Semi-simple flat vector bundles (=poly-stable Higgs bundles with vanishing Chern classes) which are invariant under  canonical $\C^{\ast}$-deformations (\cite[Section 4]{Sim}).
\item[(S2)] Complex Hodge representations of the  reductive completion of the fundamental group $\pi_{1}(M,x)$ equipped with the   pure non-abelian Hodge structure (\cite[Section 5]{Sim}).

\end{description}
In this paper, we give more  discussions for connecting these arguments to standard Hodge theory (complex Hodge structures).
By using the complex Hodge structure on the cohomology and Simpson's characterization (S1), we show that any complex variation of Hodge structure on a compact K\"ahler manifold  can be simplified  as the following.
\begin{prop}[Proposition \ref{semva}]
Any $(E,D, F^{\ast},  G^{\ast}) \in {\rm Obj}(\mathcal{VHS}_{\C}(M))$ is isomorphic to
\[\bigoplus_{V\in {\mathcal V}^{s}_{VHS_{\C}}(M)} {\mathcal H}^{0}(M, {\rm Hom}(V,E))\otimes V
\]
with the natural filtrations where ${\mathcal V}^{s}_{VHS_{\C}}(M)$ is the set of  isomorphism classes of simple flat bundles admitting  (unique) structures of complex polarized variations of Hodge structure.
\end{prop}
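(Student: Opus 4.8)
The plan is to produce the decomposition first at the level of flat bundles and then to promote it to one of variations of Hodge structure. I would begin by recording that the underlying flat bundle of $E$ is semisimple: the polarization makes the monodromy representation reductive, so by the non-abelian Hodge correspondence the associated object is a poly-stable Higgs bundle with vanishing Chern classes and splits into stable summands. Hence $E$ admits an isotypic decomposition
\[
E\;\cong\;\bigoplus_{W}{\rm Hom}_{\rm flat}(W,E)\otimes W
\]
in the category of flat bundles, where $W$ ranges over the finitely many isomorphism classes of simple flat bundles occurring in $E$, and the multiplicity space is the space of flat global sections ${\rm Hom}_{\rm flat}(W,E)=\mathcal H^{0}(M,{\rm Hom}(W,E))$.

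The heart of the argument, and the step I expect to be the main obstacle, is to show that each summand $W$ belongs to $\mathcal V^{s}_{VHS_\C}(M)$, i.e.\ that it again underlies a variation of Hodge structure. Here I would use Simpson's characterization (S1): a semisimple flat bundle carries such a structure exactly when it is fixed by the canonical $\C^{\ast}$-action. Since this action is algebraic and preserves stability, each $t\in\C^{\ast}$ carries the simple summands of $E$ to simple summands, and the $\C^{\ast}$-invariance of $[E]$ forces the multiset $\{\,t\cdot[W]\,\}$ to equal $\{\,[W]\,\}$ for every $t$. Thus the orbit $\C^{\ast}\cdot[W]$ is the continuous image of the connected group $\C^{\ast}$ and is contained in the finite set of summand-classes of $E$; being a connected subset of a finite set, it is a single point, so $t\cdot[W]=[W]$ for all $t$. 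Therefore every $W$ is $\C^{\ast}$-fixed and, by (S1), underlies a (by simplicity, essentially unique) variation of Hodge structure, so that indeed $W\in\mathcal V^{s}_{VHS_\C}(M)$.

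It remains to upgrade the splitting. Since ${\rm Hom}(W,E)$ is itself a complex polarized variation of Hodge structure, its space of flat global sections $\mathcal H^{0}(M,{\rm Hom}(W,E))$ forms the constant sub-variation of monodromy invariants and hence carries a complex Hodge structure; giving $\mathcal H^{0}(M,{\rm Hom}(W,E))\otimes W$ the tensor-product filtrations yields the ``natural filtrations'' of the statement. The assembled map to $E$ is the composite of the inclusion of these flat sections with the evaluation ${\rm Hom}(W,E)\otimes W\to E$, so it is a morphism in $\mathcal{VHS}_{\C}(M)$; by the first step it is bijective on underlying flat bundles, and a morphism of variations that is bijective on underlying bundles is an isomorphism in the abelian category $\mathcal{VHS}_{\C}(M)$. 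This yields the desired isomorphism. The delicate point throughout is precisely this compatibility with the Hodge filtrations, since the decomposition is manufactured at the flat-bundle level, where the bigrading has been forgotten.
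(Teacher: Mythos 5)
Your skeleton is the same as the paper's: split the underlying flat bundle into isotypic pieces using semisimplicity, show that every simple summand itself lies in ${\mathcal V}^{s}_{VHS_{\C}}(M)$, and then observe that the evaluation map is a filtration-compatible bijection, hence an isomorphism of $\C$-VHSs. The paper isolates your middle step as Lemma \ref{Lemsi} and, like you, ultimately reduces it to Simpson's characterization via Corollary \ref{stHo}. The problem is your justification of that middle step.

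Your key sentence --- ``the orbit $\C^{\ast}\cdot[W]$ is the continuous image of the connected group $\C^{\ast}$ \dots being a connected subset of a finite set, it is a single point'' --- is not justified as written. You have put no topology on the set of isomorphism classes of simple flat bundles (or stable Higgs bundles), and you have not shown that $t\mapsto t\cdot[W]$ is continuous for a topology in which finite subsets are discrete. Making this rigorous essentially requires the moduli space of Higgs bundles (or the character variety) together with the continuity of the $\C^{\ast}$-action on it --- i.e.\ nontrivial continuity properties of the non-abelian Hodge correspondence, machinery the paper never invokes and which is delicate over a general compact K\"ahler manifold. Fortunately the conclusion can be rescued by elementary group theory, with no topology at all: the action of $\C^{\ast}$ permutes the finitely many summand classes of $E$, so the stabilizer of $[W]$ is a finite-index subgroup of $\C^{\ast}$; but $\C^{\ast}$ has no proper finite-index subgroup (if $[G:H]=n$ with $G$ abelian then $g^{n}\in H$ for all $g$, and every element of $\C^{\ast}$ is an $n$-th power), so the stabilizer is all of $\C^{\ast}$, and in particular $[W]$ is fixed by some $t$ that is not a root of unity, as Corollary \ref{stHo} requires. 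It is instructive to compare with how the paper dodges this issue: it restricts the explicit grading automorphism $\phi(t)$ (acting by $t^{p}$ on $E^{p,q}$) to the isotypic piece $E_{1}$, sets $f_{t}=\mathrm{pr}_{E_{1}}\circ\phi(t)\circ\iota_{E_{1}}$, and uses $f_{1}=\mathrm{id}$ to get $f_{t}\neq 0$ for $t$ near $1$; there the continuity being used is that of an explicit family of linear maps, polynomial in $t$ and $t^{-1}$, not of an action on a set of isomorphism classes, and the choice of a nearby $t$ that is not a root of unity feeds directly into Corollary \ref{stHo}. So your route is viable and genuinely a bit different in mechanism, but as written it has a real gap exactly at its central claim; either repair it by divisibility of $\C^{\ast}$ as above, or replace it by the paper's explicit $f_{t}$ construction.
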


 By using this simplification and the theory  of Tannakian category, we give a reformulation of Simpson's characterization (S2) for studying  $\mathcal{VHS}_{\C}(M)$ in terms of complex Hodge structures.
 \begin{thm}[Theorem \ref{EQHo}]
 There exists a canonical pro-algebraic group  $\varpi_{1}^{VHS_{\C}}(M,x)$ over $\C$ whose   Hopf algebra ${\mathcal O}(\varpi_{1}^{VHS_{\C}}(M,x))$  of global sections of  the structure sheaf admits a canonical complex  Hodge structure of weight $0$ depending on $x\in M$ such that there exists an equivalence between the category ${\rm Rep}^{HS}(\varpi_{1}^{VHS_{\C}}(M,x))$ of Hodge representations of  $\varpi_{1}^{VHS_{\C}}(M,x)$ and the category  $\mathcal{VHS}_{\C}(M)$.
 \end{thm}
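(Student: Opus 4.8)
The plan is to realize $\mathcal{VHS}_{\C}(M)$ as a neutral Tannakian category over $\C$, to promote its fiber functor to one valued in complex Hodge structures, and then to read off the Hodge structure on the Hopf algebra from the Tannakian dual. First I would check that $\mathcal{VHS}_{\C}(M)$ is a neutral Tannakian category over $\C$: the tensor product, duals and internal Homs of complex polarized variations of Hodge structure are again such, with filtrations and polarizations induced in the standard way, so the category is a rigid abelian $\C$-linear tensor category with ${\rm End}(\mathbf{1})=\C$. By Proposition \ref{semva} every object is a direct sum of the simple objects indexed by $\mathcal{V}^{s}_{VHS_{\C}}(M)$, so the category is semisimple, and the evaluation-at-$x$ functor $\omega_{x}\colon (E,D,F^{\ast},G^{\ast})\mapsto E_{x}$ is an exact faithful tensor functor. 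Setting $\varpi_{1}^{VHS_{\C}}(M,x):=\underline{{\rm Aut}}^{\otimes}(\omega_{x})$, the Tannakian reconstruction theorem of Deligne--Milne yields an equivalence $\mathcal{VHS}_{\C}(M)\xrightarrow{\sim}{\rm Rep}(\varpi_{1}^{VHS_{\C}}(M,x))$ and exhibits $\varpi_{1}^{VHS_{\C}}(M,x)$ as a pro-reductive (hence pro-algebraic) affine group scheme over $\C$ whose Hopf algebra admits the Peter--Weyl decomposition $\mathcal{O}(\varpi_{1}^{VHS_{\C}}(M,x))=\bigoplus_{V\in\mathcal{V}^{s}_{VHS_{\C}}(M)}\omega_{x}(V)^{\vee}\otimes\omega_{x}(V)$ coming from semisimplicity.

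Next I would upgrade the fiber functor. Evaluating $F^{\ast}$ and $G^{\ast}$ at $x$ equips $E_{x}$ with a complex Hodge decomposition $E_{x}=\bigoplus_{p,q}E_{x}^{p,q}$, and since tensor products, duals and morphisms of variations induce the corresponding operations on these fibrewise Hodge structures, $\omega_{x}$ refines to a tensor functor $\tilde{\omega}_{x}\colon\mathcal{VHS}_{\C}(M)\to\mathbf{CHS}$ into the Tannakian category of complex Hodge structures. Recalling that $\mathbf{CHS}\simeq{\rm Rep}(\mathbb{G}_{m}\times\mathbb{G}_{m})$ via the bigrading by $(p,q)$, the functor $\tilde{\omega}_{x}$ dualizes to a homomorphism $h_{x}\colon\mathbb{G}_{m}\times\mathbb{G}_{m}\to\varpi_{1}^{VHS_{\C}}(M,x)$ of $\C$-group schemes; this $h_{x}$ is the Tannakian incarnation of Simpson's pure non-abelian Hodge structure in (S2). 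Pulling the bigrading back along $h_{x}$ endows each summand $\omega_{x}(V)^{\vee}\otimes\omega_{x}(V)$ with a complex Hodge structure, and because a simple $V$ of weight $w$ has $\omega_{x}(V)^{\vee}$ of weight $-w$, every summand, hence $\mathcal{O}(\varpi_{1}^{VHS_{\C}}(M,x))$ itself, is pure of weight $0$; the multiplication, comultiplication, unit, counit and antipode are morphisms of Hodge structures precisely because $\tilde{\omega}_{x}$ is a tensor functor. This produces the asserted canonical weight-$0$ Hodge structure on the Hopf algebra, manifestly depending on $x$ through the decomposition $E_{x}=\bigoplus E_{x}^{p,q}$.

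Finally I would identify ${\rm Rep}^{HS}(\varpi_{1}^{VHS_{\C}}(M,x))$ with $\mathcal{VHS}_{\C}(M)$. A Hodge representation is a representation $(\rho,V)$ equipped with a complex Hodge structure compatible with the weight-$0$ structure on $\mathcal{O}$, equivalently one that intertwines with the $\mathbb{G}_{m}\times\mathbb{G}_{m}$-action via $h_{x}$; under the equivalence of the first step such data is exactly a complex local system together with a compatible Hodge decomposition of its fibre at $x$. It then remains to check that this fibrewise datum reconstructs a genuine variation, i.e. a holomorphically varying filtration satisfying Griffiths transversality together with a polarization, and here I would invoke Simpson's characterization (S1): the semisimple local system underlying such a representation is fixed by the canonical $\C^{\ast}$-action precisely when its fibre carries the compatible Hodge decomposition, and in that case the variation, including its filtration and polarization, is uniquely determined, yielding a quasi-inverse to $\tilde{\omega}_{x}$. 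I expect this last step to be the main obstacle, since the Tannakian formalism recovers only the fibre at $x$ with its Hodge structure whereas a variation carries global data; the crux is to show that compatibility of the fibrewise Hodge structure with $h_{x}$ is equivalent to the existence and uniqueness of the global variation. Combining the three steps gives the desired equivalence ${\rm Rep}^{HS}(\varpi_{1}^{VHS_{\C}}(M,x))\simeq\mathcal{VHS}_{\C}(M)$.
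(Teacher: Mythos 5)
Your first step already diverges from what the theorem needs, and the divergence is structural, not cosmetic. The group in the theorem is the Tannakian dual of $Fl_{VHS_{\C}}(M)$ --- the \emph{full} subcategory of $Fl(M)$ whose objects are the flat bundles underlying variations, with \emph{all} flat morphisms --- and not of $\mathcal{VHS}_{\C}(M)$, whose morphisms are constrained to type $(0,0)$. These two duals are genuinely different groups. In $\mathcal{VHS}_{\C}(M)$ the shifts $V^{p,q}\mapsto V^{p-1,q+1}$ of a simple object are pairwise non-isomorphic (the identity is a flat isomorphism, but it has type $(1,-1)$, hence is not a morphism of the category), so the simple objects of $\mathcal{VHS}_{\C}(M)$ are indexed by $\mathcal{V}^{s}_{VHS_{\C}}(M)\times\Z$, and the Peter--Weyl decomposition you write down is false for your group; it is the decomposition valid for the paper's group, in which the shifts become isomorphic. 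Concretely, for $M$ a point your group is $\mathbb{G}_m$ (the dual of $\mathcal{HS}_{\C}$) while the paper's group is trivial. This matters because Tannaka duality applied to your group yields $\mathcal{VHS}_{\C}(M)\simeq{\rm Rep}(G)$ with \emph{plain} representations, which is not the assertion to be proved: the theorem asks for an equivalence with ${\rm Rep}^{HS}$, i.e.\ representations carrying an \emph{additional} compatible Hodge structure, where compatibility means the co-module map is a morphism of $\C$-HSs. For your group that extra datum over-counts the Hodge structure, which your group already encodes: in the point case, the conjugation action through $h_x$ (the only natural choice making all Hopf operations HS-morphisms, as you require) gives the trivial HS on $\mathcal{O}(\mathbb{G}_m)$, and then ${\rm Rep}^{HS}(\mathbb{G}_m)$ is the category of vector spaces bigraded by $\C^{\ast}$-weight and Hodge type, i.e.\ ${\rm Rep}(\mathbb{G}_m\times\mathbb{G}_m)$, which is not tensor-equivalent to $\mathcal{HS}_{\C}=\mathcal{VHS}_{\C}(\mathrm{pt})$. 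In effect your $G$ is $\varpi_{1}^{VHS_{\C}}(M,x)\rtimes\C^{\ast}$: it already contains the torus that the Hodge-representation formalism is supposed to adjoin, so requiring a further Hodge structure on its representations cannot reproduce $\mathcal{VHS}_{\C}(M)$.

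Second, the step you yourself flag as ``the main obstacle'' --- reconstructing a global variation from a representation with a fibrewise Hodge structure --- is exactly where the paper does concrete work that your outline leaves undone, and the paper needs no appeal to (S1) there. The paper's proof runs: (i) take the dual of $Fl_{VHS_{\C}}(M)$ and put the weight-$0$ HS on $\mathcal{O}(\varpi_{1}^{VHS_{\C}}(M,x))$ via the functors $a_t$ coming from Simpson's deformation $\theta\mapsto t\theta$, identified with the canonical HS on $\bigoplus_{V}V_x^{\ast}\otimes V_x$ (note that $V_x^{\ast}\otimes V_x$ is insensitive to the shift ambiguity in the VHS structure on $V$, which is why this HS is canonical); (ii) send $(E,D,F^{\ast},G^{\ast})$ to $(E_x,F_x^{\ast},G_x^{\ast})$, which is fully faithful by Remark \ref{remmor}; (iii) exhibit the quasi-inverse explicitly as $(U,F^{\ast},G^{\ast})\mapsto\bigoplus_{V\in\mathcal{V}^{s}_{VHS_{\C}}(M)}{\rm Hom}_{\pi_{1}(M,x)}(V_x,U)\otimes V$, using Proposition \ref{semva}. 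Point (iii) dissolves your obstacle: all global data (filtration, Griffiths transversality, polarization) is carried by the fixed simple objects $V$, and Hodge-compatibility of the co-module map forces the HS on $U$ to be a direct sum of tensor products of a HS on each multiplicity space with the canonical HS on $V_x$, so no global structure ever has to be rebuilt from the fibre. To repair your proposal you would need to replace your group by the dual of $Fl_{VHS_{\C}}(M)$ (equivalently, remove the torus you have built in) and then carry out step (iii); as written, the argument constructs a different group, asserts a Peter--Weyl decomposition that fails for it, and leaves the decisive comparison with ${\rm Rep}^{HS}$ open.
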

 
The main ingredient of this paper is to construct a infinite-dimensional  complex polarized variation of Hodge structure ${\mathfrak O}^{VHS_{\C}}(x)$  depending on a point $x\in M$ satisfying certain universal properties.
 We can see   every complex polarized variation of Hodge structure over a compact K\"ahler manifold $M$ can be presented by ${\mathfrak O}^{VHS_{\C}}(x)$ (Proposition \ref{modul}).
 Moreover, ${\mathfrak O}^{VHS_{\C}}(x)$ contains information not only on each object in $\mathcal{VHS}_{\C}(M)$ but also on  their tensor relations.
This point is very important for considering the de Rham complex  with values in  ${\mathfrak O}^{VHS_{\C}}(x)$.
We note that the usual de Rham complex on a compact K\"ahler manifold admits a canonical  structure of   a bidifferential bigraded algebra giving  the  cohomology algebra with multiplicative Hodge structures.
On the other hand, the de Rham complex with values in a complex polarized variation of Hodge structure admits a canonical structure of  a double complex giving  the  cohomology  with  Hodge structures but not a algebra structure.
We have:
\begin{thm}[Theorem \ref{deRH}]
Consider the de Rham  complex 
\[A^{\ast}(M, {\mathfrak O}^{VHS_{\C}}(x))\]
with values in the local system $ {\mathfrak O}^{VHS_{\C}}(x)$.
Then:
\begin{enumerate}
\item The cochain complex  $A^{\ast}(M, {\mathfrak O}^{VHS_{\C}}(x))$ admits a canonical  differential graded algebra structure.
\item The differential graded algebra   $A^{\ast}(M, {\mathfrak O}^{VHS_{\C}}(x))$ admits a canonical  bidifferential  bigraded algebra structure.
\item $A^{\ast}(M, {\mathfrak O}^{VHS_{\C}}(x))$ is a rational $\varpi_{1}^{VHS_{\C}}(M,x)$-module such that  $\varpi_{1}^{VHS_{\C}}(M,x)$ acts as automorphisms of the  differential graded algebra.
\item The co-module structure $A^{\ast}(M, {\mathfrak O}^{VHS_{\C}}(x))\to  A^{\ast}(M, {\mathfrak O}^{VHS_{\C}}(x))\otimes {\mathcal O}(\varpi_{1}^{VHS_{\C}}(M,x))$ is a morphism of bidifferential  bigraded algebras.
\item For $(U,F^{\ast}, G^{\ast})\in {\rm Obj}({\rm Rep}^{HS}(\varpi_{1}^{VHS_{\C}}(M,x)))$, 
\[(A^{\ast}(M, {\mathfrak O}^{VHS_{\C}}(x))\otimes U)^{\varpi_{1}^{VHS_{\C}}(M,x)}
\]
is a double complex.
\item For $( E, D, F^{\ast},  G^{\ast})\in {\rm Obj}(\mathcal{ VHS}_{\C}(M))$, the double complex $(A^{\ast}(M, {\mathfrak O}^{VHS_{\C}}(x))\otimes E_{x})^{\varpi_{1}^{VHS_{\C}}(M,x)}$ is isomorphic to the canonical double complex 
\[(A^{\ast}(M,E)^{p,q}, D^{\prime},D^{\prime\prime})\]
associated with $( E, D, F^{\ast},  G^{\ast})\in {\rm Obj}(\mathcal{ VHS}_{\C}(M))$.
\end{enumerate}

\end{thm}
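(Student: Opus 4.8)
The plan is to use the Tannakian equivalence of Theorem~\ref{EQHo} to identify ${\mathfrak O}^{VHS_{\C}}(x)$ with the ind-object of $\mathcal{VHS}_{\C}(M)$ corresponding to the regular representation of $\Pi:=\varpi_{1}^{VHS_{\C}}(M,x)$ on its coordinate ring ${\mathcal O}(\Pi)$. The fibre at $x$ is then the Hopf algebra ${\mathcal O}(\Pi)$, and ${\mathfrak O}^{VHS_{\C}}(x)$ is a local system of commutative algebras carrying a weight-$0$ complex Hodge structure: the monodromy factors through right translation, which is an algebra automorphism, while the multiplication ${\mathcal O}(\Pi)\otimes {\mathcal O}(\Pi)\to {\mathcal O}(\Pi)$ is a morphism in $\mathcal{VHS}_{\C}(M)$. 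All six assertions will be read off from this single description together with the behaviour of the de Rham functor $A^{\ast}(M,-)$.

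For (1), I would define the product on $A^{\ast}(M,{\mathfrak O}^{VHS_{\C}}(x))$ by combining the wedge product of forms with the fibrewise algebra multiplication; the flat connection $D$ is then a graded derivation, because it is compatible with the (monodromy-invariant) multiplication, so $A^{\ast}(M,{\mathfrak O}^{VHS_{\C}}(x))$ becomes a differential graded algebra. For (2), I would decompose $D$ into its Hodge components and regroup them into the two operators $D',D''$ attached to any complex variation of Hodge structure, which satisfy $D'^{2}=D''^{2}=D'D''+D''D'=0$. Since the multiplication is a morphism of variations of Hodge structure of weight $0$, the induced Hodge bigrading on $A^{\ast}(M,{\mathfrak O}^{VHS_{\C}}(x))$ is multiplicative and $D',D''$ are derivations for it; this is precisely the structure of a bidifferential bigraded algebra.

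For (3) and (4) I would exploit the two remaining structures on ${\mathcal O}(\Pi)$. The left translation action of $\Pi$ commutes with the right-translation monodromy, preserves the Hopf algebra structure and the Hodge structure, and commutes with $D',D''$; it therefore induces an action of $\Pi$ on $A^{\ast}(M,{\mathfrak O}^{VHS_{\C}}(x))$ by automorphisms of the bidifferential bigraded algebra, and rationality follows since ${\mathcal O}(\Pi)$ is the union of its finite-dimensional subcomodules. For (4), the comodule map is induced by the comultiplication $\Delta$, which is a morphism of Hopf algebras compatible with the weight-$0$ Hodge structure, hence a morphism of bidifferential bigraded algebras.

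Finally, for (5) and (6) I would pass to invariants. The group $\Pi$ acts diagonally on $A^{\ast}(M,{\mathfrak O}^{VHS_{\C}}(x))\otimes U$, and the differentials $D',D''$, acting through the first factor, are $\Pi$-equivariant, so they descend to the invariants and make $(A^{\ast}(M,{\mathfrak O}^{VHS_{\C}}(x))\otimes U)^{\Pi}$ a double complex, giving (5). For (6), taking $U=E_{x}$, I would invoke the Tannakian reconstruction isomorphism $({\mathfrak O}^{VHS_{\C}}(x)\otimes U)^{\Pi}\cong E$ of variations of Hodge structure (Proposition~\ref{modul}). The main obstacle is to interchange the formation of $\Pi$-invariants with the de Rham functor: for this I would use that $\Pi$, being pro-reductive (the reductive completion of $\pi_{1}$ enriched by the Hodge structure), has $\Pi$-invariants an exact functor computed by a Reynolds projector which commutes with the fine-sheaf functor $A^{\ast}(M,-)$. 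This yields
\[
(A^{\ast}(M,{\mathfrak O}^{VHS_{\C}}(x))\otimes U)^{\Pi}\;\cong\;A^{\ast}\bigl(M,({\mathfrak O}^{VHS_{\C}}(x)\otimes U)^{\Pi}\bigr)\;\cong\;A^{\ast}(M,E),
\]
and tracking the Hodge bigrading through the reconstruction isomorphism identifies the two differentials with the canonical pair, so that the double complex coincides with the canonical $(A^{\ast}(M,E)^{p,q},D',D'')$ associated with $(E,D,F^{\ast},G^{\ast})$.
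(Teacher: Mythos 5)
Most of your outline tracks the paper's own proof: (1) and (2) are exactly the paper's argument (wedge product combined with the fibrewise multiplication, and multiplicativity of the bigrading because ${\mathfrak O}^{VHS_{\C}}(x)\otimes{\mathfrak O}^{VHS_{\C}}(x)\to{\mathfrak O}^{VHS_{\C}}(x)$ is a morphism of $\C$-VHSs); your (4) via $\C^{\ast}$-equivariance of the comultiplication is a legitimate, slightly more structural variant of the paper's explicit check; and your (6) via Proposition \ref{modul} and interchanging invariants with $A^{\ast}(M,-)$ is sound (though the Reynolds/pro-reductivity appeal is unnecessary: the group acts only on the coefficient factor, so invariants commute with tensoring by forms for any affine group scheme; the paper instead verifies (6) directly on the decomposition of Proposition \ref{semva}). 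The problem lies in (3) and (5).

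In (3) you assert that the translation action of $\Pi=\varpi_{1}^{VHS_{\C}}(M,x)$ on ${\mathcal O}(\Pi)$ ``preserves the Hodge structure,'' and hence that $\Pi$ acts on $A^{\ast}(M,{\mathfrak O}^{VHS_{\C}}(x))$ by automorphisms of the \emph{bidifferential bigraded} algebra. This is false. Under ${\mathcal O}(\Pi)\cong\bigoplus_{V}V_{x}^{\ast}\otimes V_{x}$, the module action of $\Pi$ on the local system ${\mathfrak O}^{VHS_{\C}}(x)$ goes through the tautological representations on the constant factors $V_{x}$, and a group element of $\Pi$ does not preserve the Hodge decomposition $V_{x}=\bigoplus V_{x}^{p,q}$: if it did, then, since $\pi_{1}(M,x)$ maps into $\Pi$ with dense image, the monodromy of every $V\in{\mathcal V}^{s}_{VHS_{\C}}(M)$ would preserve its Hodge decomposition and every period map would be constant. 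This is precisely why part (3) of the theorem claims only automorphisms of the differential graded algebra. (What \emph{is} true is that the action commutes with $D'$ and $D''$, since these operators act through the local-system factors $V^{\ast}$ while the action goes through the constant factors $V_{x}$; but commuting with the two differentials is strictly weaker than preserving the bigrading.)

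This error creates a genuine gap in your (5). To make $(A^{\ast}(M,{\mathfrak O}^{VHS_{\C}}(x))\otimes U)^{\varpi_{1}^{VHS_{\C}}(M,x)}$ a double complex you need not only that $D'$ and $D''$ descend to the invariants — which is all your argument gives — but also that the invariant subspace is a \emph{bigraded} subspace, i.e.\ the direct sum of its intersections with the $(p,q)$-components; a subspace stable under two anticommuting differentials need not be bigraded. The paper obtains this from (4): the invariants are the kernel of the map $\omega\mapsto\rho()\omega-\omega\otimes 1$ with values in $A^{\ast}(M,{\mathfrak O}^{VHS_{\C}}(x))\otimes U\otimes{\mathcal O}(\varpi_{1}^{VHS_{\C}}(M,x))$, and this map has bidegree $(0,0)$ because the coaction on $A^{\ast}(M,{\mathfrak O}^{VHS_{\C}}(x))$ is a morphism of bigraded algebras (part (4)) and the coaction on $U$ is a morphism of $\C$-HSs (the definition of ${\rm Rep}^{HS}(\varpi_{1}^{VHS_{\C}}(M,x))$); a kernel of a bigraded map is bigraded. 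So the repair is to derive (5) from (4) — which your own, essentially correct, argument for (4) makes available — rather than from the false pointwise-equivariance claim.
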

By this, we obtain the cohomology algebra $H^{\ast}(M, {\mathfrak O}^{VHS_{\C}}(x))$ with the multiplicative complex Hodge structures (Corollary \ref{CCHo}).

\section{Standard Complex Hodge theory}
General references of this sections are \cite{Del, PS}.
A complex Hodge structure ($\C$-HS)
of weight $w\in \Z$ is $(V,F^{\ast}, G^{\ast})$ so that 
\begin{itemize}
\item $V$ is a finite-dimensional vector space;
\item $F^{\ast}$ and $G^{\ast}$ are decreasing filtrations of $V$ so that $V = \bigoplus_{p+q=w} V^{p,q}$ where $V^{p,q}=F^{p}(V) \cap G^{q}(V)$.
\end{itemize}
Conversely, a bigrading $V = \bigoplus_{p+q=w} V^{p,q}$ of a complex vector space determines a $\C$-HS $(V,F^{\ast}, G^{\ast})$ of weight $w$ so that $F^{r}(V)= \bigoplus_{r\ge p} V^{p,q}$ and $G^{r}(V)= \bigoplus_{r\ge q} V^{p,q}$.
 A hermitian form $h$ on $V$ is a polarization of  the Hodge structure
if $V = \bigoplus_{p+q=w} V^{p,q}$ is $h$-orthonormal and $h$ is positive on $V^{p,q}$ for even $p$  and negative for  odd $p$.
A morphism of $\C$-HSs is a $\C$-linear map which is compatible with  filtrations equivalently a $\C$-linear map which is compatible with bigradings.
We can say that any morphism of $\C$-HSs is strictly compatible with filtrations and we can say that the category of complex Hodge structures of weight $w$ is abelian.

Obviously, for all integers $w\in \Z$, the categories  of complex Hodge structures of weight $w$ are equivalent.
Hence we mainly consider the abelian category  $\mathcal{HS}_{\C}$ of  complex  Hodge structures of weight $0$. 
We notice that $\mathcal{HS}_{\C}$ is equivalent to the category ${\rm Rep}(\C^{\ast})$ of finite-dimensional rational representations of the algebraic torus $\C^{\ast}$ by the following correspondence.
For $(V,\rho)\in {\rm Rep}(\C^{\ast})$, we take the bigrading  $V = \bigoplus V^{p,-p}$ so that $V^{p,-p}=\{v\in V\vert\forall t\in \C^{\ast},  \rho(t)v=t^{p}v \}$.

More generally, we also say that a direct sum of objects in $\mathcal{HS}_{\C}$ is a $\C$-HS.
This corresponds to a possibly infinite-dimensional  rational $\C^{\ast}$-module.

\section{The category of complex variations of Hodge structure}
\subsection{Higgs bundles}
Let  $M$ be  a compact K\"ahler manifold.
A Higgs bundle over $M$ is a pair
$(E, \theta)$ consisting of a  holomorphic vector bundle $E$ over $M$ and a section
$\theta\in A^{1,0}(M,{\rm 
End}(E))$ satisfying the following two conditions:
$$\bar\partial\theta =0\ \ \text{ and }
\ \ \theta\wedge \theta=0\, .$$
This section $\theta$ is called a Higgs field on $E$. 
We assume that the all Chern classes  of $E$ vanish.

Let $H$ be a Hermitian metric on $E$. 
Define $\bar\theta_{H}\,\in\, A^{0,1}(M,{\rm End}(E))$ by
$(\theta (e_{1}),\, e_{2})\,=\,(e_{1}, \bar\theta_{H} (e_{2}))$
for $e_{1},\,e_{2}\in E$.
Let $\nabla$ be the canonical  unitary connection on $E$ associated to $H$ 
Define the connection $D=\nabla+\theta+\bar\theta_{H}$ and consider the
curvature $R^{D}=D^{2}$ of $D$.
We say that $H$ is {\em harmonic} if $R^{D}=0$ (i.e. $(E,D)$ is a flat bundle).

We say that  $(E, \theta)$  is {\em stable} if  $E$   for every 
sub-Higgs sheaf ${\mathcal V}$ of  $0<{\rm rk} (\mathcal V)\,<\,{\rm 
rk}(E)$, 
the inequality
${\rm deg}(\mathcal V)<0$
holds.

\begin{thm}[\cite{Sim1}]\label{sim1}
If $(E, \theta)$ is stable, then $(E,\theta)$ admits a harmonic metric.
\end{thm}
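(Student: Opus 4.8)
The plan is to prove this as an instance of the Hitchin--Kobayashi correspondence for Higgs bundles, adapting the Donaldson--Uhlenbeck--Yau method to the presence of the Higgs field $\theta$. First I would translate the existence of a harmonic metric into the solvability of a nonlinear elliptic equation. Given a Hermitian metric $H$, let $\nabla_H$ be the associated Chern connection and decompose $D = \nabla_H + \theta + \bar\theta_H$ as $D = D' + D''$ with $D'' = \bar\partial_E + \theta$ and $D' = \partial_H + \bar\theta_H$. The hypotheses $\bar\partial\theta = 0$ and $\theta\wedge\theta = 0$, together with their $H$-adjoints $\partial_H\bar\theta_H = 0$ and $\bar\theta_H\wedge\bar\theta_H = 0$, give $(D'')^2 = (D')^2 = 0$, so that the curvature reduces to $R^D = D^2 = D'D'' + D''D'$, whose $(1,1)$-component is the \emph{pseudo-curvature}
\[
G_H := \nabla_H^2 + [\theta, \bar\theta_H] \in A^{1,1}(M, \mathrm{End}(E)).
\]
Harmonicity of $H$ is then the flatness $R^D = 0$.

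Next I would introduce the weaker Hermitian--Einstein equation $\sqrt{-1}\,\Lambda G_H = \mu\cdot\mathrm{id}_E$, where $\Lambda$ is contraction with the K\"ahler form and $\mu$ is the constant determined by Chern--Weil theory as proportional to $\deg(E)/\mathrm{rk}(E)$. Since all Chern classes of $E$ vanish we have $\deg(E)=0$, hence $\mu = 0$ and the equation reads $\Lambda G_H = 0$. The central task is to produce a solution using stability, which I would do by the heat-flow method: starting from any metric $H_0$, run the evolution $H^{-1}\frac{\partial H}{\partial t} = -\sqrt{-1}\,\Lambda G_H$. Short-time and long-time existence follow from standard parabolic theory, the operator being a nonlinear perturbation of the Laplacian acting on positive endomorphisms.

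The main obstacle is the convergence of this flow, and this is exactly where stability enters. I expect to argue by contradiction: if the flow fails to converge, a limiting argument on the suitably normalized solutions produces, via the Uhlenbeck--Yau regularity theorem for weakly holomorphic projections, a saturated subsheaf of $E$; one then checks that the associated projection commutes with $\theta$, so that it defines a sub-Higgs sheaf $\mathcal{V}$ with $0 < \mathrm{rk}(\mathcal{V}) < \mathrm{rk}(E)$ and slope $\deg(\mathcal{V}) \ge 0$, contradicting the stability inequality $\deg(\mathcal{V}) < 0$. This contradiction yields the uniform $C^{0}$-estimate on the metric and convergence of the flow to a Hermitian--Einstein metric; this analytic step is the hard core of the argument.

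Finally, I would upgrade the Hermitian--Einstein metric to a harmonic one using the vanishing of the Chern classes. For a metric solving $\Lambda G_H = 0$, Chern--Weil theory together with the K\"ahler identities expresses the integral $\int_M |R^D|^2\,\frac{\omega^{n}}{n!}$ as a fixed universal combination of the characteristic numbers $\int_M c_1(E)^2\wedge\omega^{n-2}$ and $\int_M \mathrm{ch}_2(E)\wedge\omega^{n-2}$ (the Bogomolov--L\"ubke--Simpson inequality for Higgs bundles, whose equality case is the present situation). With the Einstein condition in force and all Chern classes vanishing, the right-hand side is zero, forcing $\int_M |R^D|^2 = 0$ and hence $R^D = 0$ pointwise. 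Therefore $D = \nabla_H + \theta + \bar\theta_H$ is flat and $H$ is the desired harmonic metric.
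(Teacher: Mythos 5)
The paper itself gives no proof of this theorem: it is quoted directly from Simpson \cite{Sim1}, under the standing assumption (stated just above it in the paper) that all Chern classes of $E$ vanish. Your outline is a correct reconstruction of exactly the argument in the cited source --- the Donaldson-style heat flow on metrics with convergence forced by stability via the Uhlenbeck--Yau weakly holomorphic projection producing a $\theta$-invariant destabilizing subsheaf, followed by the Chern--Weil/Bogomolov--Simpson equality case with vanishing Chern numbers to promote the Hermitian--Einstein metric to a flat (harmonic) one --- so it matches the paper's reference in approach, with the understanding that the hard analytic steps (long-time existence, convergence, and the regularity theorem) are invoked rather than carried out.
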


$(E, \theta)$ is called {\em polystable} if
$
(E, \theta)\,=\, \bigoplus_{i=1}^k (E_i, \theta_i)\, ,
$
where each $(E_i, \theta_i)$ is a stable Higgs bundle.
By Theorem \ref{sim1}, if $(E, \theta)$ is polystable, then $(E, \theta)$ admits a harmonic metric.
Thus we correspond polystable $(E, \theta)$ to a flat bundle $(E,D)$.

\begin{thm}[\cite{Sim}]\label{simp}
The correspondence $(E, \theta)\mapsto (E,D)$ via harmonic metrics is an equivalence between the category of stable (resp. polystable) Higgs bundles with vanishing Chern classes and the category of simple (resp. semi-simple) flat complex vector bundles.
\end{thm}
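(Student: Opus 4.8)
The plan is to exhibit the two functors explicitly through harmonic metrics, check that each is well defined on objects, verify that they are mutually quasi-inverse, and finally match the algebraic stability conditions with the representation-theoretic ones by invoking uniqueness of harmonic metrics. For the forward direction I would start from a polystable Higgs bundle $(E,\theta)$ with vanishing Chern classes. By the polystable extension of Theorem \ref{sim1} (apply the stable case to each summand) it admits a harmonic metric $H$, and then the connection $D=\nabla+\theta+\bar\theta_{H}$ has $R^{D}=0$ by the very definition of harmonicity, so $(E,D)$ is a flat bundle. This assignment sends direct sums to direct sums and is manifestly functorial on morphisms compatible with the Higgs fields.

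For the reverse direction the essential analytic input is Corlette's theorem: a semisimple flat bundle $(E,D)$ (equivalently, a reductive representation of $\pi_{1}(M,x)$) admits a harmonic metric $H$. Given such an $H$, I would decompose $D=\nabla+\psi$ into its $H$-unitary part $\nabla$ and an $H$-self-adjoint $1$-form $\psi$, and set $\theta=\psi^{1,0}$, taking the holomorphic structure $\bar\partial=\nabla^{0,1}$. Expanding the flatness relation $D^{2}=0$ together with the harmonicity identity into components of each bidegree $(p,q)$ yields exactly $\bar\partial\theta=0$ and $\theta\wedge\theta=0$, so $(E,\theta)$ is a Higgs bundle; its Chern classes vanish because $E$ carries a flat connection. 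Checking that these two constructions are inverse to one another amounts to observing that both recover the same orthogonal splitting of $D$ into unitary, $(1,0)$ and $(0,1)$ pieces determined by the harmonic metric.

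To complete the equivalence one must match stable with simple and polystable with semisimple. A harmonic metric on a stable Higgs bundle is unique up to a positive scalar, and any $D$-flat proper subbundle of the associated flat bundle would be preserved by both $\theta$ and $\bar\theta_{H}$, producing a proper sub-Higgs sheaf; the stability inequality $\deg(\mathcal V)<0$ on all such subsheaves rules this out, so $(E,D)$ is simple. Conversely, a simple flat bundle cannot give a properly polystable Higgs bundle, forcing stability. The polystable/semisimple case then follows by additivity under direct sums, with uniqueness of the harmonic metric on each stable (resp.\ simple) summand guaranteeing that the two direct-sum decompositions correspond.

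The main obstacle is concentrated in the existence theorems for harmonic metrics in both directions — Theorem \ref{sim1} on the Higgs side and Corlette's theorem on the flat side — and in the Weitzenb\"ock/Bochner-type identity on a compact K\"ahler manifold that translates the analytic harmonicity condition into the algebraic vanishing $\bar\partial\theta=0$, $\theta\wedge\theta=0$ and the flatness $R^{D}=0$. Once this dictionary between harmonicity and the two sets of structure equations is in place, the remaining verifications (functoriality, mutual inverseness, and the stability--simplicity matching) are formal consequences of the uniqueness of harmonic metrics up to scale.
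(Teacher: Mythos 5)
The paper never proves this statement: Theorem \ref{simp} is quoted as a black box from Simpson \cite{Sim}, so there is no internal proof to compare against, and your proposal is in effect an attempt to reprove Simpson's nonabelian Hodge theorem. Your architecture does match the standard (Simpson--Corlette) proof: Theorem \ref{sim1} for the Higgs-to-flat direction, Corlette's existence theorem for harmonic metrics on semisimple flat bundles (correctly identified as an input the paper does not even state), and the Bochner/pluriharmonicity argument on a compact K\"ahler manifold that turns the second-order harmonicity equation into the first-order identities $\bar\partial\theta=0$, $\theta\wedge\theta=0$.

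There is, however, a genuine gap in the categorical part, which you dismiss as ``manifest'' and as ``formal consequences of the uniqueness of harmonic metrics up to scale.'' Two specific steps do not follow from uniqueness and are exactly where the real work lies. First, functoriality: a morphism $f:(E_{1},\theta_{1})\to(E_{2},\theta_{2})$ of Higgs bundles intertwines the $\bar\partial$-operators and the Higgs fields, but there is no a priori reason it intertwines $\nabla_{i}$ and $\bar\theta_{H_{i}}$, since these depend on the chosen harmonic metrics and $f$ need not be unitary; hence flatness of $f$ for $D_{1},D_{2}$ is not manifest. Simpson's proof of this (full faithfulness) rests on the lemma that on a harmonic bundle the $D$-flat sections coincide with the $D''$-holomorphic ones, applied to ${\rm Hom}(E_{1},E_{2})$ --- itself a K\"ahler-identities/Bochner argument, not a formality. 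Second, your key assertion that a $D$-flat proper subbundle $F\subset E$ ``would be preserved by both $\theta$ and $\bar\theta_{H}$'' (and, implicitly, is holomorphic for the Higgs bundle's $\bar\partial$, without which it is not a sub-Higgs sheaf at all) is precisely the hard point of the stable$\leftrightarrow$simple matching. It is proved by a Chern--Weil/degree computation: $\deg F=0$ because $F$ is flat, while computing $\deg F$ with the harmonic metric produces minus the $L^{2}$-norms of the second fundamental form and of the off-diagonal components of $\theta$ and $\bar\theta_{H}$, forcing all of them to vanish; only then does stability's inequality $\deg(\mathcal V)<0$ give the contradiction. The same degree argument is what shows the flat bundle attached to a polystable Higgs bundle is semisimple. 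Neither step is delivered by uniqueness of harmonic metrics up to scale, so as written the proposal establishes at best a correspondence of objects carrying metrics, not an equivalence of categories.
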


\subsection{Complex Variations of Hodge structure}\label{CVHSssec}
 Let $M$ be a complex  manifold.
 A {\em  complex variation of Hodge structure}  ($\C$-VHS) of weight $w$ over  $M$ is
  $( E, D, F^{\ast},  G^{\ast})$ so that:
\begin{enumerate}
\item $ E$ is a flat complex vector bundle with a flat connection $D$.
\item  $ F^{\ast}$ and $ G^{\ast}$ are decreasing filtration of  $ E$ satisfying
the Griffiths transversality conditions $DF^{r}\subset A^{1}(M,F^{r-1})$ and $D G^{r}\subset A^{1}(M,  G^{r-1})$.
\item  For any $x\in M$, the fiber $(E_{x},   F^{\ast}_{x}, G^{\ast}_{x})$ at $x$ is a $\C$-HS.
\end{enumerate}
Equivalently  a   $\C$-VHS of weight $w$ over  $M$ 
 is $ (E=\bigoplus_{p+q=w} E^{p,q},D)$ so that 
  \begin{enumerate}
  \item 
$ E$ is   a $C^{\infty}$-complex vector bundle 
with a decomposition $\bigoplus_{p+q=w} E^{p,q}$ in a direct sum of  $C^{\infty}$-subbundles.
\item $D$ is  a flat connection  satisfying the Griffiths transversality conditions
\begin{multline*}
D: A^{0}(M,E^{p,q}) \to  A^{0,1}(M,E^{p+1,q-1}) \oplus A^{1,0}(M,E^{p,q}) \oplus A^{0,1}(M,E^{p,q}) \oplus A^{1,0}(M,E^{p-1,q+1}).
\end{multline*}
\end{enumerate}

A polarization
 $h$ of a $\C$-VHS is 
a parallel  Hermitian form  so that  the 
 decomposition  $\bigoplus_{p+q=w} E^{p,q}$  is orthogonal and   $h$ is positive on $E ^{p,q}$ for even $p$  and negative for  odd $p$.
 A $\C$-VHS is polarizable if it admits a polarization.

By the Griffiths transversality, the differential $D$ on $A^{\ast}(M, E)$  decomposes 
$D=\partial+\theta+ \bar\partial +\bar\theta$ so that:
\[\partial: A^{a,b}(M,E^{c,d})\to  A^{a+1,b}(M,E^{c,d}) ,\qquad
\bar\partial: A^{a,b}(M, E^{c,d})\to  A^{a,b+1}(M, E^{c,d}) ,
\]
\[\theta: A^{a,b}(M, E^{c,d})\to  A^{a+1,b}(M, E^{c-1,d+1}) \qquad
\ \ \text{ and }
\ \ \bar\theta: A^{a,b}(M,E^{c,d})\to  A^{a,b+1}(M, E^{c+1,d-1}).
\]

We define the bigrading 
\[A^{\ast}(M,E)^{p,q}=\bigoplus_{a+c=p,b+d=q}A^{a,b}(M,E^{c,d}),
\]
$D^{\prime}=\partial+ \bar\theta:A^{\ast}(M,E)^{p,q}\to A^{\ast}(M,E)^{p+1,q}$ and $D^{\prime\prime}=\bar\partial+\theta:A^{\ast}(M,E)^{p,q}\to A^{\ast}(M,E)^{p,q+1}$.
By the flatness $DD=0$, we have
\[D^{\prime}D^{\prime}=D^{\prime\prime}D^{\prime\prime}=D^{\prime}D^{\prime\prime}+D^{\prime\prime}D^{\prime}=0.
\]
We have the double complex 
\[(A^{\ast}(M,E)^{p,q}, D^{\prime},D^{\prime\prime})\]
 as  the usual  Dolbeault complex on a complex manifold.

\begin{thm}[\cite{Zu}]\label{FUnho}
Let $M$ be a compact K\"ahler   manifold.
For any polarized complex variation of Hodge structure of weight $w$ $(E=\bigoplus_{p+q=w}E^{p,q},D,h)$ over $M$,
the filtrations $F^{r}=\bigoplus_{r\le p}A^{\ast}(M,E)^{p,q}$ and $G^{r}=\bigoplus_{r\le q}A^{\ast}(M,E)^{p,q}$ induce a canonical complex Hodge structure of weight $i+w$ on the cohomology $H^{i}(M,E)$.
\end{thm}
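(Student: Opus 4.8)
The plan is to run the classical Kähler package of Hodge theory, with the operators $D'=\partial+\bar\theta$ and $D''=\bar\partial+\theta$ introduced above playing the roles that $\partial$ and $\bar\partial$ play in the untwisted case. First I would use the polarization $h$ to build a positive-definite \emph{Hodge metric} $k$ on $E$, defined by $k=(-1)^{p}h$ on each $E^{p,q}$; its positivity is exactly the sign condition in the definition of a polarization (positive on $E^{p,q}$ for even $p$, negative for odd $p$). Combining $k$ with the Kähler metric on $M$ produces an $L^{2}$-inner product on $A^{\ast}(M,E)$, with respect to which I define formal adjoints $(D')^{\ast}$, $(D'')^{\ast}$, $D^{\ast}$ and the associated Laplacians $\Delta_{D'}=D'(D')^{\ast}+(D')^{\ast}D'$, and likewise $\Delta_{D''}$ and $\Delta_{D}$.

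The heart of the argument is a set of \emph{Kähler identities} adapted to the harmonic bundle $(E,D,k)$. Writing $L$ for the Lefschetz operator (wedging with the Kähler form) and $\Lambda$ for its adjoint, I would prove
\[[\Lambda,D'']=-\sqrt{-1}\,(D')^{\ast},\qquad [\Lambda,D']=\sqrt{-1}\,(D'')^{\ast}.\]
These are established exactly as in the untwisted case: the Kähler form is parallel, the grouping $D=D'+D''$ is compatible with the unitary-plus-Higgs decomposition $D=\partial+\theta+\bar\partial+\bar\theta$ determined by $k$, and the verification reduces to a pointwise identity of principal symbols that is insensitive to the twisting by $E$. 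Granting these identities, the cross terms in the expansion of $\Delta_{D}=\Delta_{D'}+\Delta_{D''}+(\text{cross terms})$ cancel, yielding the fundamental relation
\[\Delta_{D}=2\Delta_{D'}=2\Delta_{D''}.\]

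With this relation in hand the conclusion is formal. Since $M$ is compact, elliptic Hodge theory gives a canonical isomorphism $H^{i}(M,E)\cong\mathcal H^{i}$ with the space of $D$-harmonic forms of total degree $i$. Because $\Delta_{D}=2\Delta_{D''}$ and $\Delta_{D''}$ is homogeneous of bidegree $(0,0)$ for the bigrading $A^{\ast}(M,E)^{p,q}$, the Laplacian $\Delta_{D}$ preserves this bigrading, so $\mathcal H^{i}$ splits as $\bigoplus_{p+q=i+w}\bigl(\mathcal H^{i}\cap A^{\ast}(M,E)^{p,q}\bigr)$; here a total-degree-$i$ form lying in $A^{a,b}(M,E^{c,d})$ has $p+q=(a+c)+(b+d)=i+w$, which is exactly the source of the weight shift. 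Transporting this decomposition to $H^{i}(M,E)$ and identifying $F^{r}$, $G^{r}$ with the filtrations induced by the bigrading yields a $\C$-HS of weight $i+w$. The only point requiring care beyond the formal machinery is that the filtrations induced on cohomology by $F^{\ast}$ and $G^{\ast}$ coincide with the harmonic bigrading—this is the standard strictness statement, which follows because each class has a harmonic representative of pure bidegree.

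The main obstacle is the proof of the Kähler identities: one must check that replacing $(\partial,\bar\partial)$ by $(D',D'')$ and the Hermitian metric by the Hodge metric $k$ leaves the symbol computation undisturbed, and in particular that the Higgs contributions $\theta,\bar\theta$ enter the commutators with $\Lambda$ with precisely the signs needed for the adjoints to appear. Once these identities are secured, every remaining step is a standard consequence of compact elliptic Hodge theory.
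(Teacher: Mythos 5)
Your proposal is correct and follows essentially the same route the paper takes (following Zucker): a positive metric built from the polarization, K\"ahler identities for $D'$, $D''$ (which the paper packages in the single form $[\Lambda,D]=-(D^{c})^{\ast}$ with $D^{c}=\sqrt{-1}(D''-D')$, equivalent to your two componentwise identities), the resulting relation $\Delta_{D}=2\Delta_{D'}=2\Delta_{D''}$, and the harmonic bigrading producing the weight-$(i+w)$ structure. One small caveat: your phrase ``pointwise identity of principal symbols'' slightly misdescribes the Higgs contributions, since $\theta,\bar\theta$ are zeroth-order operators whose commutators with $\Lambda$ are pointwise linear-algebra identities rather than symbol computations --- but you correctly flag exactly this as the step requiring care.
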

We review this fundamental fact more precisely.
We define the differential operator $D^{c}=\sqrt{-1}(D^{\prime\prime}-D^{\prime})$.
By the Hermitian  metric  on $ E$ associated with the polarization $h$ and the K\"ahler metric $g$, we define the adjoints
$D^{\ast}$, $(D^{\prime})^{\ast}$, $(D^{\prime\prime})^{\ast}$
and $(D^{c})^{\ast}$ of  differential operators.
For the K\"ahler form $\omega$ associated with $g$, we  consider the adjoint operator $\Lambda$ of the Lefschetz operator $A^{\ast}(M,E)\ni \alpha\mapsto \omega\wedge \alpha \in A^{\ast+2}(M, E)$.
In the same way as   the usual K\"ahler identity, we have 
\[[\Lambda, D]=-(D^{c})^{\ast}
\]
and  this equation gives
\[\Delta_{D}=2\Delta_{D^{\prime}}=2\Delta_{D^{\prime\prime}}
\]
where $\Delta_{D}$, $\Delta_{D^{\prime}}$ and $\Delta_{D^{\prime\prime}}$  are the Laplacian operators (see \cite{Zu}).
Write
\[{\mathcal H}^{r}(M, E)={\rm ker}(\Delta_{D})_{\vert  A^{r}(M,E) } \qquad {\rm and}\ \qquad {\mathcal H}^{\ast}(M, E)^{P,Q}={\rm ker}(\Delta_{D^{\prime\prime}})_{\vert ( A^{\ast}(M, E))^{P,Q} }.\]
Then we have the Hodge  decomposition 
\[{\mathcal H}^{r}(M, E)=\bigoplus_{P+Q=n+r}{\mathcal H}^{r}(M,E)^{P,Q}.
\]
By ${\mathcal H}^{r}(M, E)\cong H^{r}(M, E)$, we obtain a bigrading of $H^{r}(M, E)$.
As the ordinary case $H^{\ast}(M,\C)$,  the complex Hodge structure given in Theorem \ref{FUnho} is identified with this bigrading.
Since $\Lambda $ is a map of degree $-2$, by the K\"ahler identity,  we have the following useful equations
\[{\mathcal H}^{0}(M, E)^{P,Q}={\rm ker} D_{\vert  A^{0}(M, E)^{P,Q} }={\rm ker} D^{\prime}_{\vert  A^{0}(M, E)^{P,Q} }={\rm ker} D^{\prime\prime}_{\vert  A^{0}(M, E)^{P,Q} }.
\]

\subsection{Higgs bundles and $\C$-VHSs}
For  a polarizable  $\C$-VHS $( E, D, F^{\ast},  G^{\ast})$,   by the decomposition $E=\bigoplus_{p+q=w} E^{p,q}$ as above,  $E$ is  regarded as a holomorphic vector bundle with the Dolbeault operator $\bar\partial$ and  $(E, \theta)$ is a Higgs bundle.
This correspondence is an example of Theorem \ref{simp}.
The harmonic metric is a Hermitian metric associated with a polarization $h$.
We define the $\C^{\ast}$-action $\phi: \C^{\ast}\to GL(E)$ so that $\phi(t)e=t^{p}e$ for $e\in E^{p,q}$.
Then   $\phi(t)$ is an isomorphism $(E,\theta)\cong (E,t\theta)$.

We can characterize polarizable $\C$-VHSs in terms of Higgs bundles.
\begin{prop}[{\rm \cite[Lemma 4.1]{Sim}}]\label{HIVH}
Let $(E, \theta)$ be a polystable Higgs bundle.
If for some $t\in \C^{\ast}$ which is not a root of unity we have an  isomorphism $f: (E,\theta)\cong (E,t\theta)$,
then $f$ defines  a polarizable $\C$-VHS $( E, D, F^{\ast},  G^{\ast})$ so that  $(E, \theta)$ is given by the decomposition $E=\bigoplus_{p+q=w} E^{p,q}$.
\end{prop}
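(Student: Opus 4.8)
The plan is to read off the Hodge decomposition of $E$ directly from the generalized eigenspaces of the intertwining isomorphism $f$, and then to feed the resulting system of Hodge bundles into the harmonic metric correspondence of Theorems \ref{sim1} and \ref{simp} in order to produce the flat connection and the polarization. First I would note that $f$ is a holomorphic automorphism of the bundle $E$, so the coefficients of its characteristic polynomial are global holomorphic functions on the compact connected manifold $M$, hence constant. Thus $f$ has finitely many constant eigenvalues $\alpha$, and the generalized eigenbundles $E_{(\alpha)}=\ker (f-\alpha\,\mathrm{id})^{N}$, with $N=\mathrm{rk}\,E$, are holomorphic subbundles cut out by the Riesz projectors (polynomials in $f$); this yields a holomorphic direct sum $E=\bigoplus_{\alpha}E_{(\alpha)}$. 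The morphism condition $f\circ\theta=t\,\theta\circ f$ rewrites as $(f-t\alpha\,\mathrm{id})\circ\theta=t\,\theta\circ(f-\alpha\,\mathrm{id})$, and iterating gives $(f-t\alpha\,\mathrm{id})^{N}\circ\theta=t^{N}\theta\circ(f-\alpha\,\mathrm{id})^{N}$; hence $\theta$ maps $A^{0}(M,E_{(\alpha)})$ into $A^{1,0}(M,E_{(t\alpha)})$, i.e. the Higgs field shifts the eigenvalue by the factor $t$.

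Next I would organize the eigenvalues into orbits under multiplication by $t$. Since $E$ has only finitely many eigenvalues and $t$ is not a root of unity, the numbers $t^{k}\mu$ are pairwise distinct, so each orbit is a finite chain $\mu,t\mu,\dots,t^{\ell}\mu$. Assigning to the members of each chain an integer $p$ that drops by one as one moves in the direction of $\theta$ (that is, $p(t\alpha)=p(\alpha)-1$) and regrouping, I obtain a grading $E=\bigoplus_{p}E^{p}$ by holomorphic subbundles with $\theta$ carrying $A^{0}(M,E^{p})$ into $A^{1,0}(M,E^{p-1})$, exactly the degree $-1$ behaviour of a Higgs field on a system of Hodge bundles; setting $q=w-p$ for a suitable normalization $w$ gives $E=\bigoplus_{p+q=w}E^{p,q}$. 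I would emphasize that no semisimplicity of $f$ is needed here: the generalized eigenspace decomposition is already an honest direct sum of subbundles, and the nilpotent part of $f$ acts inside each $E_{(\alpha)}$ and so preserves the grading. The operator $\phi(s)$ acting by $s^{p}$ on $E^{p}$ is then a $\C^{\ast}$-action realizing $(E,\theta)$ as a $\C^{\ast}$-fixed point, i.e. as a system of Hodge bundles.

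Finally, to upgrade this grading to a polarizable $\C$-VHS I would invoke the harmonic metric. Since $(E,\theta)$ is polystable with vanishing Chern classes, Theorems \ref{sim1} and \ref{simp} furnish a harmonic metric $H$ together with the associated flat connection $D=\nabla+\theta+\bar\theta_{H}$, recovering $(E,\theta)$ as the Higgs bundle of the construction in Subsection \ref{CVHSssec}. Because the grading arises from a $\C^{\ast}$-action, I would choose $H$ invariant under $\phi(U(1))$, which is legitimate by the essential uniqueness of the harmonic metric on a polystable object; then $E=\bigoplus_{p+q=w}E^{p,q}$ is $H$-orthogonal. Consequently $\nabla$ preserves the grading, while $\theta$ and $\bar\theta_{H}$ have degrees $-1$ and $+1$, so decomposing $D$ by bidegree yields precisely the Griffiths transversality required in the definition of a $\C$-VHS. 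The sign-twisted Hermitian form $h=\bigoplus_{p,q}(-1)^{p}H|_{E^{p,q}}$ is positive on even $p$ and negative on odd $p$ and makes the decomposition orthogonal; combining the alternation of signs with the facts that $\nabla$ is unitary and that $\bar\theta_{H}$ is the $H$-adjoint of $\theta$ shows $h$ is $D$-parallel, so $h$ is a polarization.

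The step I expect to be the main obstacle is exactly this last compatibility: arranging the harmonic metric to respect the eigenspace grading and verifying that the sign-twisted form is $D$-parallel, that is, faithfully translating the $\C^{\ast}$-fixed Dolbeault datum back into the flat, polarized picture of a $\C$-VHS. By contrast the eigenvalue bookkeeping of the first two paragraphs is essentially formal once constancy of the eigenvalues is secured; the analytic input through Theorems \ref{sim1} and \ref{simp} and the parallelism of the polarization are where the real work lies.
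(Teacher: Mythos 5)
Your proposal is correct and takes essentially the same route as the source: the paper gives no proof of this proposition at all, quoting it as Simpson's Lemma 4.1, and Simpson's argument is precisely yours --- constancy of the eigenvalues of $f$, the relation $f\circ\theta = t\,\theta\circ f$ forcing $\theta\colon E_{(\alpha)}\to E_{(t\alpha)}\otimes\Omega^{1}$, the finite $t$-chains (using that $t$ is not a root of unity) assembling into a system of Hodge bundles, and then the harmonic metric, made $\phi(U(1))$-invariant by its essential uniqueness, supplying the flat connection, Griffiths transversality, and the sign-twisted parallel polarization. Your flagging of the $U(1)$-invariance of the harmonic metric as the main technical point is also accurate: that is exactly where Simpson's proof leans on his earlier Yang--Mills results (the paper's Theorem \ref{sim1}).
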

Consider the equivalence $(E, \theta)\mapsto (E,D)$ in Theorem \ref{simp}.
For each $t\in \C^{\ast}$,  corresponding  $(E, t\theta)\mapsto (E,D_{t})$, we can define the $\C^{\ast}$-deformations $\{(E,D_{t})\}_{t\in \C^{\ast}}$ of semi-simple flat bundle $(E,D)$.

\begin{cor}\label{stHo}
Let $(E,D)$ be a simple flat bundle.
If for some $t\in \C^{\ast}$ which is not a root of unity we have a non-trivial morphism $f: (E,D)\to (E,D_{t})$,
then there exists a unique polarizable $\C$-VHS $( E, D, F^{\ast},  G^{\ast})$ of weight $0$.

\end{cor}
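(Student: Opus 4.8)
The plan is to reduce the statement to Simpson's equivalence (Theorem \ref{simp}) and to Proposition \ref{HIVH}, invoking simplicity (Schur's lemma) twice: once to upgrade the nontrivial morphism $f$ to an isomorphism, and once to force uniqueness. First I would observe that the deformation $(E,D_{t})$ is itself a simple flat bundle. Under Theorem \ref{simp} the simple bundle $(E,D)$ corresponds to a stable Higgs bundle $(E,\theta)$, and $(E,D_{t})$ corresponds by definition to $(E,t\theta)$. Since $t\neq 0$, a subsheaf of $E$ is $t\theta$-invariant if and only if it is $\theta$-invariant, and the degree of a subsheaf is independent of the Higgs field; hence $(E,t\theta)$ is stable exactly when $(E,\theta)$ is, so $(E,D_{t})$ is again simple. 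Consequently $f\colon (E,D)\to (E,D_{t})$ is a nonzero morphism between two irreducible local systems, and Schur's lemma forces it to be an isomorphism.

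Next I would transport $f$ across the equivalence. Because the correspondence of Theorem \ref{simp} is an equivalence of categories (in particular full and faithful), $f$ corresponds to an isomorphism of Higgs bundles $\tilde f\colon (E,\theta)\cong (E,t\theta)$. As $t$ is not a root of unity, Proposition \ref{HIVH} applies directly and produces a polarizable $\C$-VHS $(E,D,F^{\ast},G^{\ast})$ whose Hodge decomposition $E=\bigoplus_{p+q=w}E^{p,q}$ recovers $(E,\theta)$. Shifting bidegrees, which is harmless since the categories of $\C$-HS of different weights are equivalent, I normalize the weight to $0$; this establishes existence.

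For uniqueness, suppose $(E,D,F_{i}^{\ast},G_{i}^{\ast})$, $i=1,2$, are two weight-$0$ polarizable $\C$-VHS structures on the same simple $(E,D)$. Each determines a $\C^{\ast}$-action $\phi_{i}\colon \C^{\ast}\to GL(E)$ with $\phi_{i}(t)|_{E_{(i)}^{p,q}}=t^{p}$, and by the construction recalled before Proposition \ref{HIVH} this $\phi_{i}(t)$ is an isomorphism $(E,\theta_{i})\cong(E,t\theta_{i})$. The Higgs bundle attached to a polarizable VHS is the one singled out by Theorem \ref{simp}; since for a simple bundle the harmonic metric is unique up to a positive scalar, the associated decomposition $D=\nabla+\theta+\bar\theta_{H}$ is canonical, so $\theta_{1}=\theta_{2}=\theta$. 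Thus $\phi_{1}(t)$ and $\phi_{2}(t)$ are two isomorphisms of the single stable Higgs bundle $(E,\theta)$ onto $(E,t\theta)$, and by Schur's lemma the space of such isomorphisms is one-dimensional, so $\phi_{1}(t)=c(t)\phi_{2}(t)$ for a scalar $c(t)$. As a ratio of homomorphisms $\C^{\ast}\to GL(E)$, the map $c$ is a character $t\mapsto t^{n}$; comparing eigenvalues gives $E_{(1)}^{p,-p}=E_{(2)}^{p-n,\,n-p}$ for all $p$, so the two Hodge gradings agree up to the global bidegree shift by $n$. The weight-$0$ normalization together with the sign pattern of the polarization pins down the labeling, whence $n=0$ and the two structures coincide.

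The main obstacle is the uniqueness step. Existence is immediate once $f$ is recognized as an isomorphism, but uniqueness requires eliminating all a priori freedom: one must know that both VHS structures induce the \emph{same} underlying Higgs field $\theta$ (this rests on the uniqueness of the harmonic metric for a simple bundle), and one must convert the scalar ambiguity supplied by Schur's lemma into a bidegree shift and then kill that shift using the weight-$0$ and polarization-sign conventions. The point that ties the argument together, and which I would check carefully, is that the $\C^{\ast}$-action of a weight-$0$ VHS really does correspond under Theorem \ref{simp} to (a scalar multiple of) the essentially unique isomorphism $\tilde f$, so that the Hodge grading is forced to be the eigenspace decomposition of $\tilde f$.
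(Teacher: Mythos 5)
Your existence argument is correct and is exactly the derivation the paper intends for this corollary: Schur's lemma upgrades the non-trivial morphism $f$ to an isomorphism (your preliminary check that $(E,D_{t})$ is again simple, via stability of $(E,t\theta)$, is the right way to make Schur applicable), full faithfulness of the equivalence in Theorem \ref{simp} transports $f$ to an isomorphism $(E,\theta)\cong(E,t\theta)$, and Proposition \ref{HIVH} then yields the polarizable $\C$-VHS, normalized to weight $0$. Your uniqueness argument is also sound up to its last sentence: uniqueness of the harmonic metric on a simple flat bundle gives $\theta_{1}=\theta_{2}$, Schur's lemma gives $\phi_{1}(t)=c(t)\phi_{2}(t)$ with $c(t)=t^{n}$, and comparing eigenspaces gives $E_{(1)}^{p,-p}=E_{(2)}^{p-n,n-p}$.

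The genuine gap is the final claim that the weight-$0$ convention and the sign pattern of the polarization force $n=0$. They do not, and no argument can force this, because literal uniqueness is false: if $(E,D,F^{\ast},G^{\ast})$ is a polarizable weight-$0$ $\C$-VHS with grading $E^{p,-p}$ and polarization $h$, then the shifted grading $\tilde{E}^{p,-p}:=E^{p-2,-(p-2)}$ again has weight $0$, satisfies Griffiths transversality, and is polarized by the \emph{same} $h$ (the parity of $p$ is unchanged), yet its filtrations are different; an odd shift works as well after replacing $h$ by $-h$. Already for the trivial flat line bundle, placing the fiber in bidegree $(p,-p)$ gives a valid polarizable weight-$0$ structure for every $p$, and these are pairwise non-isomorphic even in $\mathcal{VHS}_{\C}(M)$, since morphisms there are required to have type $(0,0)$. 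So the correct conclusion of your argument --- and the only tenable reading of the corollary and of Proposition \ref{uniq}, consistent with Simpson's formulation that the $\C$-VHS structure on a simple flat bundle is unique up to translation of indices --- is uniqueness up to a uniform shift of the bigrading, i.e. up to tensoring with a one-dimensional $\C$-HS. That weaker statement is all the paper needs downstream: the objects built from it, such as ${\mathcal H}^{0}(M,{\rm Hom}(V,E))\otimes V$ in Proposition \ref{semva} and $V_{x}^{\ast}\otimes V_{x}$ in the Hodge structure on ${\mathcal O}(\varpi_{1}^{VHS_{\C}}(M,x))$, are unchanged when the grading of $V$ is shifted.
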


\subsection{Categories}
Let $M$ be a connected smooth   manifold.
Denote by $Fl(M)$ the category of the complex flat vector bundles over $M$ and by $Fl^{s}(M)$ (resp. $Fl^{ss}(M)$) the full sub-category  of $Fl(M)$ whose objects are simple (resp. semi-simple) complex flat vector bundles.
Suppose $M$ is a compact K\"ahler manifold.
By the same reason as the  $\C$-HS case, it is sufficient to consider $\C$-VHSs of weight $0$.
Define the category $\mathcal{VHS}_{\C}(M)$  so that objects are polarizable $\C$-VHSs of weight $0$ over $M$ and for $( E_{1}, D_{1}, F^{\ast}_{1},  G^{\ast}_{1}), ( E_{2}, D_{2}, F^{\ast}_{2},  G^{\ast}_{2})\in {\rm Obj}(\mathcal{VHS}_{\C}(M))$, 
\[{\rm Mor}_{\mathcal{VHS}_{\C}(M)} (( E_{1}, D_{1}, F^{\ast}_{1},  G^{\ast}_{1}), ( E_{2}, D_{2}, F^{\ast}_{2},  G^{\ast}_{2}))={\mathcal H}^{0}(M, {\rm Hom}(E_{1},E_{2}))^{0,0}.
\]

\begin{remark}\label{remmor}
Let ${\rm Rep}(\pi_{1}(M,x))$ be the category of finite-dimensional complex  representation of the fundamental group $\pi_{1}(M,x)$.
We consider the  functor $Fl(M)\ni (E,D)\mapsto (E_{x}, \rho_{D}) \in {\rm Rep}(\pi_{1}(M,x))$ given by taking the monodromy representations $\rho_{D}: \pi_{1}(M,x)\to GL(E_{x})$ of flat bundles.
It is well-known that this functor is an equivalence.
This equivalence gives  an isomorphism $H^{0}(M, E)\cong E_{x}^{\pi_{1}(M,x)}$.

For a polarizable $\C$-VHS $( E, D, F^{\ast},  G^{\ast})$ over a compact  K\"ahler manifold $M$, 
$E_{x}^{\pi_{1}(M,x)} $ is a complex sub-Hodge structure  in the fiber   $( E_{x},  F^{\ast}_{x},  G^{\ast}_{x})$.
This structure is isomorphic to the canonical  complex Hodge structure on $H^{0}(M, E)$ hence this does not depend on the choice of $x\in M$.
By the equivalence $Fl(M)\ni (E,D)\mapsto (E_{x}, \rho_{D}) \in {\rm Rep}(\pi_{1}(M,x))$, 
 we have an isomorphism  \[{\mathcal H}^{0}(M, {\rm Hom}(E_{1},E_{2}))^{0,0}\cong {\rm Hom}_{\pi_{1}(M,x)}(E_{1x},E_{2x})^{0,0}\] for $( E_{1}, D_{1}, F^{\ast}_{1},  G^{\ast}_{1}), ( E_{2}, D_{2}, F^{\ast}_{2},  G^{\ast}_{2})\in {\rm Obj}(\mathcal{VHS}_{\C}(M))$.

\end{remark}
Denote by $Fl_{VHS_{\C}}(M)$ (resp. $Fl^{s}_{VHS_{\C}}(M)$) the full sub-category of  $Fl(M)$ (resp. $Fl^{s}(M)$) whose objects are  $(E,D)\in {\rm Obj}(Fl(M))$ (resp. ${\rm Obj}(Fl^{s}(M)$) which come from  $(E,D, F^{\ast},  G^{\ast}) \in {\rm Obj}(\mathcal{VHS}_{\C}(M))$.
Then it is known that $Fl_{VHS_{\C}}(M)$ is a subcategory of $Fl^{ss}(M)$ (see \cite{Gri}).
By Corollary \ref{stHo}, we have the following statement.
\begin{prop}\label{uniq}
For each  $(E,D)\in  {\rm Obj}(Fl^{s}_{VHS_{\C}}(M))$,
$(E,D, F^{\ast},  G^{\ast}) \in {\rm Obj}(\mathcal{VHS}_{\C}(M))$ is unique up to isomorphism.
\end{prop}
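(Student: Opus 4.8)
The plan is to reduce Proposition \ref{uniq} directly to the uniqueness assertion contained in Corollary \ref{stHo}, which I would use as a black box. The statement concerns an object $(E,D)\in {\rm Obj}(Fl^{s}_{VHS_{\C}}(M))$, so by definition of $Fl^{s}_{VHS_{\C}}(M)$ there exists at least one complex polarized variation of Hodge structure $(E,D,F^{\ast},G^{\ast})$ of weight $0$ whose underlying flat bundle is $(E,D)$. The content to be proved is therefore purely that any two such VHS structures $(E,D,F_{1}^{\ast},G_{1}^{\ast})$ and $(E,D,F_{2}^{\ast},G_{2}^{\ast})$ on the same simple flat bundle are isomorphic in $\mathcal{VHS}_{\C}(M)$.

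First I would unwind what Corollary \ref{stHo} actually provides. Given the simple flat bundle $(E,D)$ lying in $Fl^{s}_{VHS_{\C}}(M)$, it corresponds under Theorem \ref{simp} to a stable Higgs bundle $(E,\theta)$, and the existence of the VHS structure is equivalent, via Proposition \ref{HIVH}, to the existence of a non-trivial morphism $f\colon (E,D)\to (E,D_{t})$ for some $t\in\C^{\ast}$ that is not a root of unity, where $\{(E,D_{t})\}$ is the $\C^{\ast}$-deformation. Since $(E,D)$ is simple and hence $(E,\theta)$ is stable, such a morphism $f$ is unique up to scalar. The bigrading $E=\bigoplus_{p,q}E^{p,q}$ underlying the VHS of weight $0$ is recovered as the eigenspace decomposition of (a suitable normalization of) $f$: one has $f$ acting as $t^{p}$ on $E^{p,-p}$. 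Consequently the filtrations $F^{\ast},G^{\ast}$ are completely determined by $f$, and $f$ is determined up to scalar by simplicity. Rescaling $f$ by a nonzero scalar does not change the eigenspace decomposition, hence does not change the bigrading; it only shifts the weight by a constant, which we have normalized to $0$. This pins the bigrading uniquely and proves the uniqueness.

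Concretely, the key steps I would carry out in order are: (i) record that membership in $Fl^{s}_{VHS_{\C}}(M)$ supplies the hypothesis of Corollary \ref{stHo}, namely the existence of a non-trivial $f\colon(E,D)\to(E,D_{t})$; (ii) invoke simplicity of $(E,D)$, equivalently stability of the associated Higgs bundle via Theorem \ref{simp}, to conclude ${\rm End}$ of the object is one-dimensional so that $f$ is unique up to a nonzero scalar; (iii) show that the VHS data $(E^{p,q},F^{\ast},G^{\ast})$ produced by the construction in Proposition \ref{HIVH} depends only on the eigenspace decomposition of $f$; and (iv) observe that scaling $f$ leaves this decomposition invariant after normalizing the weight to $0$, so the resulting object of $\mathcal{VHS}_{\C}(M)$ is independent of all choices up to isomorphism.

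The main obstacle I anticipate is step (iii): one must verify carefully that the normalization of the weight to $0$ genuinely removes the remaining ambiguity in the eigenvalues of $f$. The morphism $f$ gives eigenvalues of the form $\lambda t^{p}$ for an undetermined scalar $\lambda$, so a priori the exponents $p$ are only well-defined up to a common additive shift; fixing weight $0$, i.e.\ demanding $\sum_{p,q}(p+q)\dim E^{p,q}$ be centered appropriately (or equivalently requiring the Hodge structure to be of weight $0$ as in our standing convention following the $\C$-HS discussion), is what rigidifies the grading. I would make this explicit by comparing two putative VHS structures through their associated morphisms $f_{1},f_{2}$, noting $f_{2}=c\,f_{1}$ for some $c\in\C^{\ast}$ by simplicity, and checking that the weight-$0$ normalization forces the induced bigradings to coincide, so that the identity map of $E$ is an isomorphism of the two objects in $\mathcal{VHS}_{\C}(M)$.
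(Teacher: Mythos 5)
Your top-level plan coincides with the paper's proof, which is literally the one-line citation ``By Corollary \ref{stHo}'': your step (i) --- that membership in $Fl^{s}_{VHS_{\C}}(M)$ supplies, via the $\C^{\ast}$-action $\phi$ with $\phi(t)\colon (E,\theta)\cong(E,t\theta)$, the non-trivial morphism $(E,D)\to(E,D_{t})$ required as the hypothesis --- is exactly the implicit content of that reduction, and your step (ii) (Schur's lemma: nonzero morphisms between the simple objects $(E,D)$ and $(E,D_{t})$ form a space of dimension at most one) is also correct. Had you genuinely treated the uniqueness clause of Corollary \ref{stHo} as a black box, you would be done, in the same way as the paper.

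The gap is in the part where you open the black box, precisely at the point you yourself flag as the main obstacle: the claim that ``normalizing the weight to $0$'' rigidifies the grading is false. For a weight-$0$ structure every summand already satisfies $p+q=0$, so the quantity $\sum_{p,q}(p+q)\dim E^{p,q}$ vanishes identically and sees nothing; the shifted bigrading defined by declaring $E^{p,-p}$ to have bidegree $(p+k,-p-k)$ is again of weight $0$, still satisfies Griffiths transversality (it has the same operators $\bar\partial$ and $\theta$), and is still polarizable (replace $h$ by $(-1)^{k}h$). This shift is exactly the ambiguity left by rescaling $f$: replacing $f$ by $t^{k}f$ relabels the eigenvalues $\lambda t^{p}$ and translates all exponents by $k$, and no weight condition can detect this. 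Moreover, the shifted object is genuinely non-isomorphic to the original in $\mathcal{VHS}_{\C}(M)$ as the paper defines it: morphisms are flat sections of ${\rm Hom}$ of type $(0,0)$, the only flat endomorphisms of a simple flat bundle are scalars, and a scalar viewed as a map to the $k$-shifted object has type $(k,-k)$, hence is not a morphism for $k\neq 0$. So what your argument (which is essentially the argument behind Simpson's Lemma 4.1, i.e.\ Proposition \ref{HIVH}) actually establishes is uniqueness of the bigrading \emph{up to translation of indices}, which is strictly weaker than the stated proposition. To be fair, this imprecision originates in the paper: the word ``unique'' in Corollary \ref{stHo} is asserted there without proof and carries the same caveat. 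But your proposal commits to the stronger statement and rests it on a normalization argument that cannot work; to remove the shift ambiguity one must either enlarge the morphisms of the category (allowing isomorphisms of pure Hodge type $(k,-k)$) or impose a normalization of the grading itself (e.g.\ prescribing the index of the lowest nonzero piece), neither of which is accomplished by fixing the weight.
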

Let ${\mathcal V}^{s}_{VHS_{\C}}(M)$ be the set of all  isomorphism classes in $Fl^{s}_{VHS_{\C}}(M)$.
By Proposition \ref{uniq}, this can be seen as a set of  isomorphism classes  in $\mathcal{VHS}_{\C}(M)$.

\begin{prop}\label{semva}
Any $(E,D, F^{\ast},  G^{\ast}) \in {\rm Obj}(\mathcal{VHS}_{\C}(M))$ is isomorphic to
\[\bigoplus_{V\in {\mathcal V}^{s}_{VHS_{\C}}(M)} {\mathcal H}^{0}(M, {\rm Hom}(V,E))\otimes V
\]
with the natural filtrations.
\end{prop}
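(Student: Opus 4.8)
The plan is to descend to the underlying flat bundle, establish the isotypic (semisimple) decomposition there, identify the multiplicity spaces with the Hodge-theoretic spaces ${\mathcal H}^{0}(M,{\rm Hom}(V,E))$, and then check that the evaluation isomorphism respects the bigradings, so that it upgrades to an isomorphism in $\mathcal{VHS}_{\C}(M)$. First I would recall that the flat bundle $(E,D)$ underlying any object of $\mathcal{VHS}_{\C}(M)$ is semi-simple, since $Fl_{VHS_{\C}}(M)\subseteq Fl^{ss}(M)$. Passing to monodromy via the equivalence of Remark \ref{remmor}, $E_{x}$ is a semisimple $\pi_{1}(M,x)$-module and thus splits into its isotypic components $E_{x}\cong\bigoplus_{V}{\rm Hom}_{\pi_{1}(M,x)}(V_{x},E_{x})\otimes V_{x}$, the sum being over the finitely many isomorphism classes of simple flat bundles $V$ occurring in $E$. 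By Remark \ref{remmor} the multiplicity space ${\rm Hom}_{\pi_{1}(M,x)}(V_{x},E_{x})$ is canonically identified with ${\mathcal H}^{0}(M,{\rm Hom}(V,E))$, and by Theorem \ref{FUnho} the latter carries a canonical $\C$-HS of weight $0$ (since ${\rm Hom}(V,E)$ is a weight-$0$ $\C$-VHS and $i=0$).

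Next I would show that each simple summand $V$ actually lies in $Fl^{s}_{VHS_{\C}}(M)$. For this, pass to the polystable Higgs bundle $(E,\theta)$ attached to the $\C$-VHS, together with the $C^{\infty}$-isomorphism $\phi(t)\colon(E,\theta)\cong(E,t\theta)$ given by $\phi(t)e=t^{p}e$ on $E^{p,q}$. Decompose $(E,\theta)$ into stable Higgs factors, fix $t$ not a root of unity, and note that $\phi(t)$ matches the stable factors of $(E,\theta)$ with those of $(E,t\theta)$, hence induces a permutation $\sigma$ of the stable factors with $(E_{\alpha},\theta_{\alpha})\cong(E_{\sigma(\alpha)},t\theta_{\sigma(\alpha)})$. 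Iterating along an orbit of length $n$ gives $(E_{\alpha},\theta_{\alpha})\cong(E_{\alpha},t^{n}\theta_{\alpha})$ with $t^{n}$ again not a root of unity, so Proposition \ref{HIVH} endows each stable factor—equivalently each simple flat summand $V$—with a polarizable $\C$-VHS structure, unique by Proposition \ref{uniq}. Therefore every $V$ occurring belongs to ${\mathcal V}^{s}_{VHS_{\C}}(M)$, which confirms that the index set of the asserted direct sum is the correct one.

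Finally I would promote the splitting to the level of $\C$-VHS. Each ${\mathcal H}^{0}(M,{\rm Hom}(V,E))\otimes V$ is a $\C$-VHS, namely the tensor product of the constant VHS associated with the $\C$-HS ${\mathcal H}^{0}(M,{\rm Hom}(V,E))$ and the simple VHS $V$, carrying the natural bigrading. I would then use the evaluation map of flat bundles ${\rm ev}\colon\bigoplus_{V}{\mathcal H}^{0}(M,{\rm Hom}(V,E))\otimes V\to E$, $\phi\otimes v\mapsto\phi(v)$, which realizes the isotypic isomorphism above. The point to check is that ${\rm ev}$ preserves the bigradings: by the useful equations at the end of Section \ref{CVHSssec}, an element of ${\mathcal H}^{0}(M,{\rm Hom}(V,E))^{a,-a}$ is a flat homomorphism $V\to E$ shifting Hodge type by $(a,-a)$, so for $v\in V^{p,q}$ one gets $\phi(v)\in E^{p+a,q-a}$; comparing with the tensor-product bigrading, the $(c,d)$-component on the left is sent into $E^{c,d}$. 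Thus ${\rm ev}$ is a morphism in $\mathcal{VHS}_{\C}(M)$ of type $(0,0)$, and being a bijection of the underlying flat bundles that preserves the bigrading, it is an isomorphism of $\C$-VHS, which is exactly the claim.

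I expect the main obstacle to be the middle step: controlling the $\C^{\ast}$-deformation on the stable Higgs factors so as to guarantee that each simple constituent is itself a $\C$-VHS. The isotypic decomposition and the bigrading bookkeeping in the evaluation map are, by contrast, routine once this is in place.
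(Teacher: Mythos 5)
Your proof is correct, and its skeleton --- isotypic decomposition of the underlying semi-simple flat bundle, reduction to the claim that every simple constituent of $E$ lies in $Fl^{s}_{VHS_{\C}}(M)$, then a bigrading check on the evaluation map --- is exactly the skeleton of the paper's proof (the reduction is the paper's Lemma \ref{Lemsi}). Where you genuinely diverge is in the proof of that key claim. The paper restricts the $\C^{\ast}$-action $\phi(t)$ to the isotypic flat summand $E_{1}={\mathcal H}^{0}(M,{\rm Hom}(V,E))\otimes V$, obtaining for each $t$ a morphism $f_{t}\colon (E_{1},D)\to (E_{1},D_{t})$, and argues by continuity: $f_{1}={\rm id}$, so $f_{t}$ is non-trivial for $t$ sufficiently close to $1$, and such a $t$ can be chosen not a root of unity; Corollary \ref{stHo} then applies (after passing from the isotypic piece to the simple factor $V$). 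You instead work entirely on the Higgs side: decompose $(E,\theta)$ into stable factors, invoke uniqueness of the decomposition into stables (Krull--Schmidt in the semi-simple category of polystable Higgs bundles) to get a permutation $\sigma$ with $(E_{\alpha},\theta_{\alpha})\cong (E_{\sigma(\alpha)},t\theta_{\sigma(\alpha)})$, and compose along a $\sigma$-orbit of length $n$ to produce an isomorphism $(E_{\alpha},\theta_{\alpha})\cong (E_{\alpha},t^{n}\theta_{\alpha})$, to which Proposition \ref{HIVH} applies since $t^{n}$ is again not a root of unity. Both arguments are sound and both ultimately rest on Proposition \ref{HIVH}; yours trades the paper's continuity step (and the need to perturb $t$ toward $1$) for combinatorial bookkeeping with a single fixed $t$, which is arguably cleaner, at the cost of invoking uniqueness of stable decompositions. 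One presentational point you should fix: you endow ${\mathcal H}^{0}(M,{\rm Hom}(V,E))$ with its weight-$0$ Hodge structure via Theorem \ref{FUnho} already in your first paragraph, but ${\rm Hom}(V,E)$ is a polarizable $\C$-VHS (so that Theorem \ref{FUnho} and the harmonic space ${\mathcal H}^{0}$ make sense) only once $V$ is known to carry a $\C$-VHS structure, which you establish in the second paragraph; reorder so that the Hodge structure on the multiplicity spaces is introduced only after that middle step.
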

\begin{proof}
We can easily say that the canonical map
\[\bigoplus_{V\in {\mathcal V}^{s}_{VHS_{\C}}(M)} {\mathcal H}^{0}(M, {\rm Hom}(V,E))\otimes V \ni\sum f_{i}\otimes v_{i} \mapsto \sum f_{i}(v_{i})\in V
\]
is injective and compatible with filtrations.

Let ${\mathcal V}^{s}(M)$ be the set of all  isomorphisms classes in $Fl^{s}(M)$.
In $Fl^{ss}(M)$, $(E,D)$  is isomorphic to
\[\bigoplus_{V\in {\mathcal V}^{s}(M)} {\mathcal H}^{0}(M, {\rm Hom}(V,E))\otimes V.
\]
Hence, it is sufficient to prove the following lemma.

\end{proof}
\begin{lemma}\label{Lemsi}
For $V\in {\rm Obj}(Fl^{s}(M))$, if ${\mathcal H}^{0}(M, {\rm Hom}(V,E))\not=0$,  then $V \in  {\rm Obj}(Fl^{s}_{VHS_{\C}}(M))$.
\end{lemma}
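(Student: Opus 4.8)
The plan is to deduce the statement from Corollary~\ref{stHo}: it suffices to exhibit, for the simple flat bundle $V$, a scalar $s\in\C^{\ast}$ that is not a root of unity together with a non-trivial morphism $V\to V_{s}$ to its $\C^{\ast}$-deformation. Given such an $s$, Corollary~\ref{stHo} equips $V$ with a (unique) polarizable $\C$-VHS of weight $0$, which is exactly the assertion $V\in{\rm Obj}(Fl^{s}_{VHS_{\C}}(M))$.

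First I would unwind the hypothesis. The harmonic space ${\mathcal H}^{0}(M,{\rm Hom}(V,E))$ coincides with the space of flat sections of ${\rm Hom}(V,E)$, that is, with the flat morphisms $V\to E$; so ${\mathcal H}^{0}(M,{\rm Hom}(V,E))\neq 0$ furnishes a non-trivial flat morphism $\psi\colon V\to E$. Since $V$ is simple, $\ker\psi$ is $0$ or $V$, and as $\psi\neq 0$ the map $\psi$ is injective. Because $E$ underlies an object of $\mathcal{VHS}_{\C}(M)$ it lies in $Fl^{ss}(M)$, so the flat subbundle ${\rm im}\,\psi\cong V$ is in fact a direct summand. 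Hence $V$ is isomorphic to one of the finitely many simple flat summands of $E$.

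The second ingredient is the $\C^{\ast}$-invariance of $E$. As $E$ comes from a $\C$-VHS, the associated Higgs bundle $(E,\theta)$ carries the $\C^{\ast}$-action $\phi$ with $\phi(t)\colon(E,\theta)\cong(E,t\theta)$; transporting this isomorphism through the equivalence of Theorem~\ref{simp} gives $E\cong E_{t}$ as flat bundles for every $t\in\C^{\ast}$. Moreover the deformation $W\mapsto W_{t}$ is induced by the invertible rescaling $\theta\mapsto t\theta$, so for each fixed $t$ it is an auto-equivalence of $Fl^{ss}(M)$; in particular it carries the decomposition of $E$ into its summands, whence $V_{t}$ is a direct summand of $E_{t}\cong E$, and iterating, $V_{t^{n}}$ is a simple summand of $E$ for every $n\ge 0$. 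Now fix a $t\in\C^{\ast}$ that is not a root of unity. The bundles $V_{t^{n}}$ all lie among the finitely many isomorphism classes of simple summands of the single bundle $E$, so by the pigeonhole principle $V_{t^{m}}\cong V_{t^{n}}$ for some $0\le m<n$. Applying the auto-equivalence $(-)_{t^{-m}}$ and using multiplicativity $(W_{a})_{b}=W_{ab}$ yields $V\cong V_{s}$ with $s=t^{n-m}$, which is again not a root of unity since $t$ is not and $n-m\ge 1$. This isomorphism is the required non-trivial morphism, and Corollary~\ref{stHo} finishes the proof.

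The step deserving the most care, and the one I regard as the crux, is the implication ``$V$ a summand of $E$ $\Rightarrow$ $V_{t}$ a summand of $E$'': it rests jointly on the functoriality and invertibility of the $\C^{\ast}$-deformation and on the VHS-invariance $E\cong E_{t}$, and it is precisely what confines the entire orbit $\{V_{t^{n}}\}$ to the finite set of simple constituents of $E$ so that the pigeonhole argument applies. One could alternatively argue that $t\mapsto[V_{t}]$ is a continuous map from the connected $\C^{\ast}$ into the finite, hence discrete, set of isomorphism classes of summands of $E$, and is therefore constant, giving $V_{t}\cong V$ directly; this requires continuity of the deformation family, whereas the pigeonhole version needs only its multiplicativity.
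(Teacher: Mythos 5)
Your proof is correct, and it reaches the same pivot --- Corollary \ref{stHo} --- by a genuinely different mechanism than the paper. The paper works with the isotypic summand $E_{1}={\mathcal H}^{0}(M,{\rm Hom}(V,E))\otimes V$ inside a flat splitting $E=E_{1}\oplus E_{2}$ and deforms the identity map: with $\phi(t)$ the $\C^{\ast}$-action attached to the bigrading of $E$, it sets $f_{t}={\rm pr}_{E_{1}}\circ \phi(t)\circ \iota_{E_{1}}\colon E_{1}\to (E_{1})_{t}$, notes $f_{1}={\rm id}$, and uses continuity in $t$ to obtain a non-trivial morphism for some $t$ close to $1$ that is not a root of unity; Corollary \ref{stHo} then applies (after passing from the isotypic block $E_{1}\cong W\otimes V$ to the simple factor $V$, a reduction the paper leaves implicit). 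You instead make $V$ itself a direct summand of $E$ via semisimplicity of $E$, and replace the continuity step by a finiteness argument: functoriality and invertibility of the deformation $(-)_{t}$, together with the VHS-invariance $E\cong E_{t}$, confine the whole orbit $\{[V_{t^{n}}]\}_{n\ge 0}$ to the finitely many simple constituents of $E$, and pigeonhole plus multiplicativity $(W_{a})_{b}\cong W_{ab}$ yield an isomorphism $V\cong V_{s}$ with $s$ not a root of unity. What each buys: the paper's argument is shorter and needs only one deformation parameter near $1$, but it rests on the analytic inputs that $t\mapsto f_{t}$ is continuous and that $\phi(t)$ is a morphism of flat bundles $(E,D)\to(E,D_{t})$ under Simpson's equivalence; your argument is purely categorical once the deformations are known to form a multiplicative family of auto-equivalences of $Fl^{ss}(M)$ (which the paper itself records via the functors $a_{t}$), and it produces an actual isomorphism $V\cong V_{s}$ rather than merely a non-zero morphism --- though by Schur's lemma these are equivalent for simple objects. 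Your closing alternative (continuity of $t\mapsto[V_{t}]$ into a discrete set of isomorphism classes) is the variant closest in spirit to the paper, except that the paper deforms a morphism rather than an isomorphism class, which is what lets it avoid discussing the topology on the set of classes altogether.
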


\begin{proof}
Let $E_{1}={\mathcal H}^{0}(M, {\rm Hom}(V,E))\otimes V$.
Then we have a decomposition  $E=E_{1}\oplus E_{2}$ of a flat bundle $(E,D)$.
Consider the $\C^{\ast}$-action $\phi: \C^{\ast}\to GL(E)$ associated with $E=\bigoplus_{p+q=w} E^{p,q}$.
 For $t\in \C^{\ast}$, define the morphism $f_{t}:E_{1}\to E_{1}$ by the composition of the injection $E_{1}\hookrightarrow E$, $\phi(t)$ and the projection $E\to E_{1}$.
By $f_{1}={\rm id} $, for $t\in \C^{\ast}$ sufficiently  close to $1$, $f_{t}$ is non-trivial.
Lemma \ref{Lemsi} follows from Corollary \ref{stHo}.
\end{proof}

\section{The complex variations of Hodge structures with universal properties}
\subsection{Tannakian Categories and Hodge representations}
A category $\mathcal C$ with a functor $\otimes: {\mathcal C}\times {\mathcal C}\to {\mathcal C}$  is a {\em (additive)  ${\mathbb K}$-tensor category} if:
\begin{itemize}
\item  $\mathcal C$ is an additive $\C$-linear category.
\item $\otimes : {\mathcal C}\times {\mathcal C}\to {\mathcal C}$ is a bi-linear functor which satisfies the associativity and commutativity. (see \cite{DM})
\item There exists an identity object $({\bf 1}, u)$ that is ${\bf 1}\in {\rm Ob}({\mathcal C})$ with an isomorphism $u:{\bf 1}\to {\bf 1}\otimes {\bf 1}$ satisfying the functor ${\mathcal C}\ni V\mapsto {\bf 1}\otimes V\in {\mathcal C}$ is an equivalence of categories.

\end{itemize}
A $\C$-tensor category $\mathcal C$ is {\em rigid}  if all objects admit duals.

For two $\C$-tensor categories   $({\mathcal C}_{1},\otimes_{1})$ and $({\mathcal C}_{2},\otimes_{2})$,
a {\em tensor functor} is a functor $F:{\mathcal C}_{1}\to {\mathcal C}_{2}$ with a functorial isomorphism $c_{U,V}:F(U)\otimes F(V)\to F(U\otimes V)$ so that $(F,c)$ is compatible with the associativities and commutativities of $({\mathcal C}_{1},\otimes_{1})$ and $({\mathcal C}_{2},\otimes_{2})$ (see \cite{DM}) and 
for an identity object  $({\bf 1}, u)$ of $({\mathcal C}_{1},\otimes_{1})$ $(F({\bf 1}), F(u))$ is an identity object of $({\mathcal C}_{2},\otimes_{2})$.

The category ${\rm Vect}_{\C}$ of $\C$-vector space with the usual tensor product $\otimes$ is a tensor category.
For a tensor category $({\mathcal C},\otimes)$, an exact faithful tensor functor ${\mathcal C}\to {\rm Vect}_{\C}$ is called a {\em fiber functor} for ${\mathcal C}$.
A {\em neutral $\C$-Tannakian category}  is an abelian rigid   $\C$-tensor category $\mathcal C$ with a fiber functor $\omega: {\mathcal C}\to {\rm Vect}_{\C}$  such that   $\C= {\rm End}(\bf 1)$.
\begin{example}
Let $G$ be a  pro-algebraic group $G$ over $\C$.
Let ${\rm Rep}(G)$ be the of finite-dimensional rational  representations $(V,\rho)$ of the pro-algebraic group $G$.
Then ${\rm Rep}(G)$ equipped with the natural fiber functor $(V,\rho) \mapsto V$ is  neutral $\C$-Tannakian category.

\end{example}

\begin{thm}[\cite{DM}]\label{Tandual}
Every neutral  $\C$-Tannakian category $\mathcal C$ is equivalent to the category ${\rm Rep}(G)$ of finite-dimensional rational representations of a pro-algebraic group $G$ over $\C$.
More precisely,  this correspondence $\mathcal C\mapsto G$ is  a contravariant  functor $\Pi$ from the category of neutral  Tannakian categories to the category of pro-algebraic groups over $\C$
where morphisms of neutral  Tannakian categories are exact faithful tensor functors commuting with fiber functors.
\end{thm}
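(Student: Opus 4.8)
The plan is to realize $G$ as the group of tensor automorphisms of the fibre functor and then reconstruct $\mathcal C$ as its representation category, following the reconstruction method of \cite{DM}. First I would define, for every commutative $\C$-algebra $R$, the set $G(R)=\underline{\rm Aut}^{\otimes}(\omega)(R)$ consisting of all families $\lambda=(\lambda_{X})_{X\in {\rm Ob}(\mathcal C)}$ of $R$-linear automorphisms $\lambda_{X}$ of $\omega(X)\otimes_{\C}R$ that are natural in $X$, satisfy $\lambda_{\bf 1}={\rm id}$, and are compatible with the tensor isomorphisms $c_{X,Y}$, i.e. $\lambda_{X\otimes Y}=\lambda_{X}\otimes\lambda_{Y}$. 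This is visibly a group-valued functor on $\C$-algebras, and the tautological action of $G$ on each $\omega(X)$ produces a candidate functor $\mathcal C\to {\rm Rep}(G)$, $X\mapsto(\omega(X),\lambda_{X})$.

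The central step is representability. I would construct the representing Hopf algebra as the coend $\mathcal{O}(G)=\int^{X}\omega(X)^{\vee}\otimes_{\C}\omega(X)$, that is, the quotient of $\bigoplus_{X}\omega(X)^{\vee}\otimes\omega(X)$ by the dinaturality relations $\omega(f)^{\vee}(\xi)\otimes v\sim\xi\otimes\omega(f)(v)$ for morphisms $f\colon X\to Y$. The comultiplication is induced on each summand by the map $\omega(X)^{\vee}\otimes\omega(X)\to(\omega(X)^{\vee}\otimes\omega(X))\otimes(\omega(X)^{\vee}\otimes\omega(X))$ dual to composition in ${\rm End}(\omega(X))$; the algebra structure comes from the tensor product $\otimes$ of $\mathcal C$ via $(\omega(X)^{\vee}\otimes\omega(X))\otimes(\omega(Y)^{\vee}\otimes\omega(Y))\cong\omega(X\otimes Y)^{\vee}\otimes\omega(X\otimes Y)$; and, crucially, the antipode is supplied by rigidity (the existence of duals). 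Verifying that $\mathcal{O}(G)$ is a commutative Hopf algebra and that ${\rm Spec}\,\mathcal{O}(G)$ corepresents $\underline{\rm Aut}^{\otimes}(\omega)$ is the main technical labor. To remain within affine group schemes of finite type at each stage, I would first treat finitely $\otimes$-generated subcategories $\langle X\rangle$, where $G_{\langle X\rangle}$ embeds as the closed subgroup of $GL(\omega(X))$ cut out by the tensor relations, and then pass to the inverse limit over all such subcategories to obtain the pro-algebraic group $G$; this inverse limit is exactly what makes $\Pi$ take values in pro-algebraic groups.

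It then remains to prove that $X\mapsto(\omega(X),\lambda_{X})$ is an equivalence $\mathcal C\simeq {\rm Rep}(G)$, together with functoriality. Fullness and faithfulness amount to the identity ${\rm Hom}_{\mathcal C}(X,Y)={\rm Hom}_{G}(\omega(X),\omega(Y))$: a $G$-equivariant map is precisely one commuting with every $\lambda\in G(R)$, and exactness and faithfulness of $\omega$ force such a map to descend to a morphism in $\mathcal C$. Essential surjectivity is the subtler half: I would show that $\mathcal{O}(G)$, viewed as the regular $G$-representation, is the filtered union of the finite-dimensional subrepresentations $\omega(X)^{\vee}\otimes\omega(X)$, and that every object of ${\rm Rep}(G)$ embeds into a finite direct sum of copies of $\mathcal{O}(G)$; since these building blocks all come from $\mathcal C$ and $\mathcal C$ is abelian and closed under subquotients, every finite-dimensional rational representation of $G$ is isomorphic to $\omega(X)$ for some $X$. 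Finally, functoriality of $\mathcal C\mapsto G$ is straightforward: a fibre-functor-preserving exact tensor functor $F\colon\mathcal C_{1}\to\mathcal C_{2}$ induces $G_{2}\to G_{1}$ by restricting tensor automorphisms of $\omega_{2}$ along $F$, and this assignment is contravariant. I expect the reconstruction of the Hopf algebra and the proof of essential surjectivity to be the principal obstacles, since both require careful bookkeeping of the coend relations and of the comodule–representation dictionary.
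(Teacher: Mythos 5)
The paper offers no proof of this statement: it is quoted as-is from Deligne--Milne \cite{DM}, and your sketch is a faithful outline of exactly that standard reconstruction argument (tensor automorphisms of the fiber functor $\omega$, representability of the Hopf algebra via the coend $\int^{X}\omega(X)^{\vee}\otimes\omega(X)$, reduction to finitely $\otimes$-generated subcategories with passage to the inverse limit to obtain the pro-algebraic structure, then full faithfulness and essential surjectivity through the comodule dictionary, and contravariance of $\mathcal C\mapsto G$ by restriction along fibre-functor-preserving tensor functors). Your route therefore coincides with the cited source's, and the sketch is a correct plan of that proof.
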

We call $G=\Pi(\mathcal C)$ the Tannakian dual of a neutral  $\C$-Tannakian category $\mathcal C$.

\begin{example}
Consider the  category ${\mathcal HS}_{\C}$ of  complex  Hodge structures of weight $0$.
 We can define the tensor product of  $\C$-HSs.
 Thus, $\mathcal{ HS}_{\C}$ is a $\C$-tensor category.
Since $\mathcal{ HS}_{\C}$ is abelian, we can easily say that ${\mathcal HS}_{\C}$ is a neutral $\C$-Tannakian category with the fiber functor ${\rm Ob}(\mathcal{ HS}_{\C})\ni (V, F^{\ast})\mapsto V\in  {\rm Ob}({\rm Vect}_{\C})$.
The Tannakian dual of $\mathcal{ HS}_{\C}$ is the algebraic torus $\C^{\ast}$.

\end{example}

\begin{example}
Let $\Gamma$ a group.
Let ${\rm Rep}(\Gamma)$ be the of finite-dimensional complex  representations $(V,\rho)$ of the  group $\Gamma$.
Then ${\rm Rep}(\Gamma)$ equipped with the natural fiber functor $(V,\rho) \mapsto V$ is  neutral $\C$-Tannakian category.
It is well known that the Tannakian dual of ${\rm Rep}(\Gamma)$ is the pro-algebraic completion of $\Gamma$ defined by the inverse limit of the system of representations $\rho: \Gamma\to  G$ with the Zariski-dense images for complex  algebraic groups $G$ (see \cite{Sim}).
\end{example}

Let $M$ be a connected smooth   manifold and $x\in M$.
Then the category  $Fl(M)$ is  a neutral  $\C$-Tannakian category with the fiber functor ${\rm Ob}(Fl(M))\ni (E,D)\mapsto E_{x}\in  {\rm Ob}({\rm Vect}_{\C})$.
We denote by $\varpi_{1}(M,x)$  the Tannakian dual of $Fl(M)$.
 $Fl^{ss}(M)$ is  a semi-simple neutral  $\C$-Tannakian category.
 We denote by $\varpi_{1}^{red}(M,x)$  the Tannakian dual of $Fl^{ss}(M)$.
 Then, $\varpi_{1}^{red}(M,x)$ is  pro-reductive. 
 By the monodromy functor $Fl(M)\to {\rm Rep}(\pi_{1}(M,x))$, 
  $\varpi_{1}(M,x)$ is identified with  the algebraic completion   of the fundamental group $\pi_{1}(M,x)$ and $\varpi_{1}^{red}(M,x)= \varpi_{1}(M,x)/R_{u}(\varpi_{1}(M,x))$ where $R_{u}(\varpi_{1}(M,x))$ is the pro-unipotent radical of $\varpi_{1}(M,x)$  the natural functor $Fl^{ss}(M)\to Fl(M)$ corresponds to the quotient  map.

Suppose $M$ is a compact K\"ahler manifold.
Then $Fl_{VHS_{\C}}(M)$ is a neutral  $\C$-Tannakian category.
We denote by $\varpi_{1}^{VHS_{\C}}(M,x)$  the Tannakian dual of $Fl_{VHS_{\C}}(M)$.
Corresponding to the natural functor $Fl_{VHS_{\C}}(M)\to Fl^{ss}(M)$, we have the surjection $\varpi_{1}^{red}(M,x)\to \varpi_{1}^{VHS_{\C}}(M,x)$.
Denote by ${\mathcal O}(\varpi_{1}^{VHS_{\C}}(M,x))$  the Hopf algebra of global sections of  the structure sheaf.
By the left $\varpi_{1}^{VHS_{\C}}(M,x)$-action, ${\mathcal O}(\varpi_{1}^{VHS_{\C}}(M,x))$  is a rational $\varpi_{1}^{VHS_{\C}}(M,x)$-module.
Any rational $\varpi_{1}^{VHS_{\C}}(M,x)$-module $U$ is identified with a  co-module structure $U\to U\otimes {\mathcal O}(\varpi_{1}^{VHS_{\C}}(M,x))$ (see \cite[Proposition 2.2]{DM}).
By the fundamental arguments, regarding $(U\otimes {\mathcal O}(\varpi_{1}^{VHS_{\C}}(M,x)))^{\varpi_{1}^{VHS_{\C}}(M,x)}$ as a rational $\varpi_{1}^{VHS_{\C}}(M,x)$-module via the right action 
we have an isomorphism 
\[(U\otimes {\mathcal O}(\varpi_{1}^{VHS_{\C}}(M,x)))^{\varpi_{1}^{VHS_{\C}}(M,x)}\cong U
\]
of rational $\varpi_{1}^{VHS_{\C}}(M,x)$-modules given by mapping $\sum u\otimes  f\in (U\otimes {\mathcal O}(\varpi_{1}^{VHS_{\C}}(M,x)))^{\varpi_{1}^{VHS_{\C}}(M,x)}$ to $\sum f(1)u\in U$ where $1$ is the unit element in the group $\varpi_{1}^{VHS_{\C}}(M,x)$ (the proof of \cite[Theorem 9.1]{Gro} is valid without any change).
By Proposition \ref{semva}, $Fl_{VHS_{\C}}(M)$ is semi-simple and  any simple object of  $Fl_{VHS_{\C}}(M)$ is in $Fl^{s}_{VHS_{\C}}(M)$.
Thus, as a rational  $\varpi_{1}^{VHS_{\C}}(M,x)\times \varpi_{1}^{VHS_{\C}}(M,x)$-module,
we have an isomorphism
\[{\mathcal O}(\varpi_{1}^{VHS_{\C}}(M,x))\cong \bigoplus_{V\in  {\mathcal V}^{s}_{VHS_{\C}}(M)} V_{x}^{\ast}\otimes V_{x}
\]
given by mapping
$\sum f\otimes v\in \bigoplus_{V\in  {\mathcal V}^{s}_{VHS_{\C}}(M)} V_{x}^{\ast}\otimes V_{x} $ to the function \[   \left(\varpi_{1}^{VHS_{\C}}(M,x)\ni g\mapsto \sum f(gv)\in \C\right)\in {\mathcal O}(\varpi_{1}^{VHS_{\C}}(M,x)).\]
This isomorphism corresponds the product on ${\mathcal O}(\varpi_{1}^{VHS_{\C}}(M,x) )$  to  the tensor product in the tensor category $Fl_{VHS_{\C}}(M)$.

Let $Hi(M)$ be the category of Higgs bundles $(E,\theta)$ over $M$ and  $Hi^{ps}(M)$ (resp $Hi^{s}(M)$) the full sub-category of poly-stable (resp. stable) Higgs bundles.
For each  $t\in \C^{\ast}$, we define the functor $ Hi(M)\to Hi(M)$ by  $(E,\theta)\mapsto (E,t\theta)$.
This functor  is  restricted to functors on  $Hi^{ps}(M)$ and $Hi^{s}(M)$.
Corresponding to the equivalence $Fl^{ss}(M)\cong Hi^{ps}(M)$ as Theorem \ref{simp}, we can define a  functor $a_{t}: {\rm Rep}(\varpi_{1}^{red}(M,x))\to  {\rm Rep}(\varpi_{1}^{red}(M,x))$.
By Proposition \ref{HIVH}, $(U,\rho)\in {\rm Rep}(\varpi_{1}^{red}(M,x))$ is $(U,\rho)\in  {\rm Rep}(\varpi_{1}^{VHS_{\C}}(M,x))$ if and only if for some  $t\in \C^{\ast}$ which is not a root of unity (resp.  every $t\in \C^{\ast}$) $a_{t}(U,\rho)$  is isomorphic to $(U,\rho)$.
For each simple representation  $(U,\rho)\in  {\rm Rep}(\varpi_{1}^{VHS_{\C}}(M,x))$, corresponding to an isomorphism $a_{t}(U,\rho)\cong (U,\rho)$ for    $t\in  \C^{\ast}$, we have a canonical  $\C$-HS on  $U$.
We can regard $(U,\rho)$ as $V_{x}$ with the monodromy representation  for $V\in  {\mathcal V}^{s}_{VHS_{\C}}(M)$.
It is easily check that  a canonical  $\C$-HS on  $U$ is identified with the $\C$-HS on $V_{x}$ associated with a unique $\C$-VHS structure on $V\in  {\mathcal V}^{s}_{VHS_{\C}}(M)$.
By the tannakian duality (Theorem \ref{Tandual}), corresponding to  the functor $a_{t}:  {\rm Rep}(\varpi_{1}^{VHS_{\C}}(M,x)) \to  {\rm Rep}(\varpi_{1}^{VHS_{\C}}(M,x))$ for each $t\in \C^{\ast}$, we have the  $\C^{\ast}$-action on the pro-algebraic group $ \varpi_{1}^{VHS_{\C}}(M,x)$.
Hence, $\C^{\ast}$ acts on ${\mathcal O}(\varpi_{1}^{VHS_{\C}}(M,x))$ as Hopf  algebra automorphisms.
We can say that this action implies a multiplicative  $\C$-HS on ${\mathcal O}(\varpi_{1}^{VHS_{\C}}(M,x))$ which is identified with the $\C$-HS on 
\[{\mathcal O}(\varpi_{1}^{VHS_{\C}}(M,x))\cong \bigoplus_{V\in  {\mathcal V}^{s}_{VHS_{\C}(M)}} V_{x}^{\ast}\otimes V_{x}
\]
given by each $\C$-VHS $V\in  {\mathcal V}^{s}_{VHS_{\C}}(M)$.
\begin{remark}
The functors $ Hi(M)\ni (E,\theta)\mapsto (E,t\theta)\in Hi(M)$ also define a group action of $\C^{\ast}$ on $\varpi_{1}^{red}(M,x)$.
Simpson's characterization (S2) in Introduction is given by this $\C^{\ast}$-action on $\varpi_{1}^{red}(M,x)$.
However, the Hopf algebra ${\mathcal O}(\varpi_{1}^{red}(M,x))$ may not be a rational $\C^{\ast}$-module and so may not define a $\C$-HS.
For example,  let  $M$ be the compact complex torus $T^{2}$.
By $\pi_{1}(M,x)\cong \Z^{2}$, the $\C^{\ast}$-action on $\varpi_{1}^{red}(M,x)$  gives permutations on  the product $\Pi \C^{\ast}$ indexed by an uncountable   set.
Hence the $\C^{\ast}$-action on ${\mathcal O}(\varpi_{1}^{red}(M,x))$ does not define a $\C$-HS.
\end{remark}

Define the category ${\rm Rep}^{HS}(\varpi_{1}^{VHS_{\C}}(M,x))$ so that 
\begin{itemize}
\item Objects are  $(U,F^{\ast}, G^{\ast})$ such that 
\begin{itemize}
\item $(U,F^{\ast}, G^{\ast})$ are in   $\mathcal{HS}_{\C}$.
\item $U$ are rational $\varpi_{1}^{VHS_{\C}}(M,x)$-modules such that the associated co-module structures $U\to  U\otimes {\mathcal O}(\varpi_{1}^{VHS_{\C}}(M,x))$ are  morphisms of $\C$-HSs.
\end{itemize}
\item For $(U_{1},F^{\ast}_{1}, G^{\ast}_{1}),(U_{2}, F^{\ast}_{2}, G^{\ast}_{2})\in {\rm Ob}(Rep^{HS}(\varpi_{1}^{VHS_{\C}}(M,x)))$, morphisms from $(U_{1},F^{\ast}_{1}, G^{\ast}_{1})$ to $(U_{2}, F^{\ast}_{2}, G^{\ast}_{2})$ are morphisms of  $\C$-HSs which are  compatible with  $\varpi_{1}^{VHS_{\C}}(M,x)$-module structures.

\end{itemize}

\begin{lemma}
For any $V\in {\mathcal V}^{s}_{VHS_{\C}}(M)$,  $V_{x}$ is an object in  ${\rm Rep}^{HS}(\varpi_{1}^{VHS_{\C}}(M,x))$.
\end{lemma}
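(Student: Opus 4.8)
The plan is to verify directly the three conditions in the definition of an object of ${\rm Rep}^{HS}(\varpi_{1}^{VHS_{\C}}(M,x))$ for the datum consisting of $V_{x}$, equipped with the monodromy representation of $\varpi_{1}^{VHS_{\C}}(M,x)$, together with the filtrations $F^{\ast}_{x}, G^{\ast}_{x}$ induced from the $\C$-VHS structure on $V$. Namely, I must check that $(V_{x},F^{\ast}_{x},G^{\ast}_{x})$ is a $\C$-HS of weight $0$, that $V_{x}$ is a rational $\varpi_{1}^{VHS_{\C}}(M,x)$-module, and that the associated co-module map $\Delta\colon V_{x}\to V_{x}\otimes {\mathcal O}(\varpi_{1}^{VHS_{\C}}(M,x))$ is a morphism of $\C$-HSs.

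The first two conditions are essentially formal. For the first, by Proposition \ref{uniq} the simple flat bundle $V$ admits a unique polarizable $\C$-VHS structure $(V,D,F^{\ast},G^{\ast})$ of weight $0$, and condition (3) in the definition of a $\C$-VHS states precisely that the fiber $(V_{x},F^{\ast}_{x},G^{\ast}_{x})$ is a $\C$-HS of weight $0$. For the second, since $V\in {\rm Obj}(Fl_{VHS_{\C}}(M))$, the fiber functor of the neutral Tannakian category $Fl_{VHS_{\C}}(M)$ realizes $V_{x}$ as a rational representation of its Tannakian dual $\varpi_{1}^{VHS_{\C}}(M,x)$, so $V_{x}$ is a rational $\varpi_{1}^{VHS_{\C}}(M,x)$-module.

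The essential step is the third condition, and here I would argue through the explicit description of the co-module map by matrix coefficients. Choose a basis $\{v_{i}\}$ of $V_{x}$ adapted to the bigrading $V_{x}=\bigoplus_{p}V_{x}^{p,-p}$, with $v_{j}\in V_{x}^{p_{j},-p_{j}}$; then $\Delta(v_{j})=\sum_{i}v_{i}\otimes \rho_{ij}$, where $\rho_{ij}\in {\mathcal O}(\varpi_{1}^{VHS_{\C}}(M,x))$ is the matrix coefficient $g\mapsto \langle v_{i}^{\ast},\rho(g)v_{j}\rangle$. Under the isomorphism ${\mathcal O}(\varpi_{1}^{VHS_{\C}}(M,x))\cong \bigoplus_{W} W_{x}^{\ast}\otimes W_{x}$ established above, $\rho_{ij}$ lies in the component $V_{x}^{\ast}\otimes V_{x}$ and corresponds to $v_{i}^{\ast}\otimes v_{j}$. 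Since the multiplicative $\C$-HS on ${\mathcal O}(\varpi_{1}^{VHS_{\C}}(M,x))$ was shown to restrict on this component to the tensor-product Hodge structure determined by the $\C$-VHS $V$, and since in the dual Hodge structure $v_{i}^{\ast}$ has type $(-p_{i},p_{i})$, the element $\rho_{ij}$ has Hodge type $(p_{j}-p_{i},p_{i}-p_{j})$. Adding types, the summand $v_{i}\otimes \rho_{ij}$ has type $(p_{i},-p_{i})+(p_{j}-p_{i},p_{i}-p_{j})=(p_{j},-p_{j})$, which is the type of $v_{j}$. Hence $\Delta$ carries $V_{x}^{p_{j},-p_{j}}$ into $(V_{x}\otimes {\mathcal O}(\varpi_{1}^{VHS_{\C}}(M,x)))^{p_{j},-p_{j}}$, so it is compatible with the bigradings and is therefore a morphism of $\C$-HSs.

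The point requiring the most care is the identification of the Hodge type of $\rho_{ij}$, which rests on the compatibility of normalizations established in the discussion preceding the lemma: the $\C^{\ast}$-action defining the Hodge structure on ${\mathcal O}(\varpi_{1}^{VHS_{\C}}(M,x))$ agrees, on each isotypic component $V_{x}^{\ast}\otimes V_{x}$, with the tensor product of the Hodge structure on $V_{x}$ coming from the unique $\C$-VHS structure on $V$ and its dual. Once this matching of the two $\C^{\ast}$-actions is granted, the remainder is the elementary weight count above, and the three conditions are verified.
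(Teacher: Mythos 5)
Your proposal is correct and is essentially the paper's own argument: both reduce the statement to showing that the co-module map $V_{x}\to V_{x}\otimes {\mathcal O}(\varpi_{1}^{VHS_{\C}}(M,x))$ is a morphism of $\C$-HSs, and both rest on the decomposition ${\mathcal O}(\varpi_{1}^{VHS_{\C}}(M,x))\cong \bigoplus_{W\in {\mathcal V}^{s}_{VHS_{\C}}(M)} W_{x}^{\ast}\otimes W_{x}$ together with the identification, established just before the lemma, of the Hodge structure on ${\mathcal O}(\varpi_{1}^{VHS_{\C}}(M,x))$ with the one induced componentwise by the $\C$-VHS structures on the $W$. The only difference is presentational: the paper notes that the co-module map is $\varpi_{1}^{VHS_{\C}}(M,x)\times \varpi_{1}^{VHS_{\C}}(M,x)$-equivariant and hence corresponds to an element of $(V_{x}^{\ast}\otimes V_{x})^{\varpi_{1}^{VHS_{\C}}(M,x)}\otimes (V_{x}^{\ast}\otimes V_{x})^{\varpi_{1}^{VHS_{\C}}(M,x)}$, which is of type $(0,0)$, whereas your matrix-coefficient computation $(p_{i},-p_{i})+(p_{j}-p_{i},p_{i}-p_{j})=(p_{j},-p_{j})$ simply unwinds that same invariance argument in an adapted basis.
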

\begin{proof}
It is sufficient to show that  the  co-module structure $V_{x}\to  V_{x}\otimes {\mathcal O}(\varpi_{1}^{VHS_{\C}}(M,x))$ is a $\C$-HS morphisms.
Let $\rho $ be the  representation of $\varpi_{1}^{VHS_{\C}}(M,x)$  associated with the $\varpi_{1}^{VHS_{\C}}(M,x)$-module $V_{x}$.
The co-module structure is given by $V_{x}\ni v\mapsto \rho()v\in V_{x}\otimes {\mathcal O}(\varpi_{1}^{VHS_{\C}}(M,x))$.
Regarding the domain  $V_{x}$ as a right $\varpi_{1}^{VHS_{\C}}(M,x)$-module and  $V_{x}$ in the codomain as a left $\varpi_{1}^{VHS_{\C}}(M,x)$-module, the map $V_{x}\ni v\mapsto \rho()v\in V_{x}\otimes {\mathcal O}(\varpi_{1}^{VHS_{\C}}(M,x)))$ is $\varpi_{1}^{VHS_{\C}}(M,x)\times \varpi_{1}^{VHS_{\C}}(M,x)$-equivariant.
By 
\[{\mathcal O}(\varpi_{1}^{VHS_{\C}}(M,x))\cong \bigoplus_{V\in  {\mathcal V}^{s}_{VHS_{\C}}(M)} V_{x}^{\ast}\otimes V_{x}.
\]
as  $\varpi_{1}^{VHS_{\C}}(M,x)\times \varpi_{1}^{VHS_{\C}}(M,x)$-modules,
we can consider the co-module structure as an element in
\[(V_{x}^{\ast}\otimes V_{x})^{\varpi_{1}^{VHS_{\C}}(M,x)}\otimes (V_{x}^{\ast}\otimes V_{x})^{\varpi_{1}^{VHS_{\C}}(M,x)}.
\]
and hence this is of type $(0,0)$ for the $\C$-HS.
Thus the lemma follows.

\end{proof}

\begin{thm}\label{EQHo}
We have an equivalence $\mathcal{ VHS}_{\C}(M)\cong {\rm Rep}^{HS}(\varpi_{1}^{VHS_{\C}}(M,x))$.
\end{thm}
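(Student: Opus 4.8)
The plan is to build the equivalence from the fiber functor of the Tannakian category $Fl_{VHS_{\C}}(M)$ enriched by the Hodge structure on the fiber, and then to match the two descriptions of morphisms and the two notions of Hodge structure. First I would define a functor
$\Phi\colon \mathcal{VHS}_{\C}(M)\to {\rm Rep}^{HS}(\varpi_{1}^{VHS_{\C}}(M,x))$
sending $(E,D,F^{\ast},G^{\ast})$ to the fiber $(E_{x},F^{\ast}_{x},G^{\ast}_{x})$, viewed as a rational $\varpi_{1}^{VHS_{\C}}(M,x)$-module through the fiber functor, together with the $\C$-HS supplied by condition (3) in the definition of a $\C$-VHS. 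To see that $\Phi$ actually lands in the target category I must check that the co-module structure $E_{x}\to E_{x}\otimes \mathcal{O}(\varpi_{1}^{VHS_{\C}}(M,x))$ is a morphism of $\C$-HSs. Using Proposition \ref{semva} I would reduce to the isotypic pieces $\mathcal{H}^{0}(M,{\rm Hom}(V,E))\otimes V$ with $V\in \mathcal{V}^{s}_{VHS_{\C}}(M)$, where the multiplicity space carries its canonical $\C$-HS and the action factors through $V_{x}$; the preceding Lemma then gives the claim, because the matrix-coefficient co-module of $V_{x}$ is built from the type-$(0,0)$ element $\sum_{i}e_{i}\otimes e_{i}^{\ast}\in V_{x}\otimes V_{x}^{\ast}$ and hence preserves Hodge type.

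Next I would verify full faithfulness. By definition a morphism in $\mathcal{VHS}_{\C}(M)$ from $E_{1}$ to $E_{2}$ is an element of $\mathcal{H}^{0}(M,{\rm Hom}(E_{1},E_{2}))^{0,0}$, and by Remark \ref{remmor} this space is identified with ${\rm Hom}_{\pi_{1}(M,x)}(E_{1x},E_{2x})^{0,0}$, i.e. with the $\pi_{1}(M,x)$-equivariant maps of Hodge type $(0,0)$. Under the monodromy equivalence these are exactly the $\varpi_{1}^{VHS_{\C}}(M,x)$-equivariant morphisms of $\C$-HSs, which is precisely the morphism space in ${\rm Rep}^{HS}(\varpi_{1}^{VHS_{\C}}(M,x))$. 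Thus $\Phi$ is fully faithful essentially by construction.

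The substantive step, and the one I expect to be the main obstacle, is essential surjectivity. Given $(U,F^{\ast},G^{\ast})\in {\rm Rep}^{HS}(\varpi_{1}^{VHS_{\C}}(M,x))$, Tannakian duality (Theorem \ref{Tandual}) already produces a flat bundle $(E,D)\in Fl_{VHS_{\C}}(M)$ with $E_{x}\cong U$ as $\varpi_{1}^{VHS_{\C}}(M,x)$-modules, so the issue is to promote the abstract Hodge structure on $U$ to a genuine $\C$-VHS structure on $E$ whose fiber recovers $(U,F^{\ast},G^{\ast})$. The key is to show that the hypothesis that the co-module $U\to U\otimes \mathcal{O}(\varpi_{1}^{VHS_{\C}}(M,x))$ be a morphism of $\C$-HSs forces the Hodge structure on $U$ to be compatible with the isotypic decomposition $U\cong \bigoplus_{V}W_{V}\otimes V_{x}$, and to restrict on each factor $V_{x}$ to the canonical $\C$-HS coming from the unique $\C$-VHS structure on $V$ (Proposition \ref{uniq}). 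Concretely, I would interpret the $\C$-HS on $U$ as a $\C^{\ast}$-action $\psi$ and the multiplicative $\C$-HS on $\mathcal{O}(\varpi_{1}^{VHS_{\C}}(M,x))$ as arising from the $\C^{\ast}$-action $\sigma$ on the group $\varpi_{1}^{VHS_{\C}}(M,x)$ constructed above; the co-module being a $\C$-HS morphism then translates into the equivariance $\psi(t)\rho(g)\psi(t)^{-1}=\rho(\sigma(t)(g))$ for the representation $\rho$, which is exactly Simpson's notion of a Hodge representation (S2) and pins down $U$ as $\bigoplus_{V}W_{V}\otimes V_{x}$ with each $W_{V}$ a $\C$-HS. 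Setting $E=\bigoplus_{V}W_{V}\otimes V$, where $W_{V}$ is regarded as a constant Hodge structure and $V$ carries its $\C$-VHS, and equipping it with the tensor-product filtrations, gives the desired preimage by Proposition \ref{semva}, and one finally checks that $\Phi$ returns $(U,F^{\ast},G^{\ast})$. The delicate point throughout is verifying that no Hodge structure on $U$ incompatible with the isotypic decomposition, or using a non-canonical weight on some $V_{x}$, can satisfy the co-module condition; this is where the explicit description of the multiplicative $\C$-HS on $\mathcal{O}(\varpi_{1}^{VHS_{\C}}(M,x))$ in terms of the $V\in \mathcal{V}^{s}_{VHS_{\C}}(M)$ does the essential work.
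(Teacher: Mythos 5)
Your proposal is correct and takes essentially the same route as the paper: the fiber functor $(E,D,F^{\ast},G^{\ast})\mapsto (E_{x},F^{\ast}_{x},G^{\ast}_{x})$ landing in ${\rm Rep}^{HS}(\varpi_{1}^{VHS_{\C}}(M,x))$ via Proposition \ref{semva} and the preceding Lemma, full faithfulness via Remark \ref{remmor}, and the quasi-inverse $(U,F^{\ast},G^{\ast})\mapsto \bigoplus_{V\in {\mathcal V}^{s}_{VHS_{\C}}(M)}{\rm Hom}_{\pi_{1}(M,x)}(V_{x},U)\otimes V$. Your closing discussion of the $\C^{\ast}$-equivariance forcing compatibility of the Hodge structure on $U$ with the isotypic decomposition only spells out details that the paper's terse proof leaves implicit.
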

\begin{proof}
For  $(E, D, F^{\ast}, G^{\ast})\in {\rm Obj}(\mathcal{VHS}_{\C}(M))$, by 
 \[E_{x}\cong \bigoplus_{V\in {\mathcal V}^{s}_{VHS_{\C}}(M)} {\mathcal H}^{0}(M, {\rm Hom}(V,E))\otimes V_{x}
\]
(Proposition \ref{semva}), $(E_{x}, F^{\ast}_{x}, G^{\ast}_{x})$
is an object in  ${\rm Rep}^{HS}(\varpi_{1}^{VHS_{\C}}(M,x))$.
By Remark \ref{remmor} , this correspondence is a fully faithful functor.
We can construct a quasi-inverse by the following way.
We correspond  $(U,F^{\ast}, G^{\ast})\in {\rm Obj}(Rep^{HS}(\varpi_{1}^{VHS_{\C}}(M,x)))$ to an object
\[\bigoplus_{V\in {\mathcal V}^{s}_{VHS_{\C}}(M)}{\rm Hom}_{\pi_{1}(M,x)}(V_{x}, U)\otimes V
\]
in $\mathcal{VHS}_{\C}(M)$.
\end{proof}

\subsection{The $\C$-VHS with universal properties}
We define the local system ${\mathfrak O}^{VHS_{\C}}(x)$ over $M$ associated with  ${\mathcal O}(\varpi_{1}^{VHS_{\C}}(M,x))$ as a $\pi_{1}(M,x)$-module via the left $\varpi_{1}^{VHS_{\C}}(M,x)$-action.
\begin{prop}
\begin{enumerate}
\item ${\mathfrak O}^{VHS_{\C}}(x)\cong \bigoplus_{V\in  {\mathcal V}^{s}_{VHS_{\C}}(M)} V^{\ast}\otimes V_{x}$.
\item ${\mathfrak O}^{VHS_{\C}}(x)$ admits a canonical  $\C$-VHS structure (the direct sum of objects in $\mathcal{VHS}_{\C}(M)$) depending on $x\in M$.
\item ${\mathfrak O}^{VHS_{\C}}(x)$ admits a structure ${\mathfrak O}^{VHS_{\C}}(x)\otimes  {\mathfrak O}^{VHS_{\C}}(x)\to  {\mathfrak O}^{VHS_{\C}}(x)$ of a local system of  $\C$-algebras.
\item The algebra   structure ${\mathfrak O}^{VHS_{\C}}(x)\otimes  {\mathfrak O}^{VHS_{\C}}(x)\to  {\mathfrak O}^{VHS_{\C}}(x)$ is a  morphism of $\C$-VHSs.

\end{enumerate}
\end{prop}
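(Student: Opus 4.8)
The plan is to establish each of the four assertions by reducing everything to the explicit decomposition in part (1), and then transporting the structures that each simple summand $V \in {\mathcal V}^{s}_{VHS_{\C}}(M)$ already carries. First I would prove the isomorphism in (1). Since ${\mathfrak O}^{VHS_{\C}}(x)$ is by definition the local system attached to ${\mathcal O}(\varpi_{1}^{VHS_{\C}}(M,x))$ viewed as a $\pi_{1}(M,x)$-module via the \emph{left} $\varpi_{1}^{VHS_{\C}}(M,x)$-action, I would start from the $\varpi_{1}^{VHS_{\C}}(M,x)\times\varpi_{1}^{VHS_{\C}}(M,x)$-module isomorphism
\[
{\mathcal O}(\varpi_{1}^{VHS_{\C}}(M,x))\cong \bigoplus_{V\in {\mathcal V}^{s}_{VHS_{\C}}(M)} V_{x}^{\ast}\otimes V_{x}
\]
already recorded in the text. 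The left factor $V_{x}^{\ast}$ carries the residual (right) action and so is inert as a $\pi_{1}(M,x)$-module, while the right factor $V_{x}$ carries the monodromy; hence as a local system the summand $V_{x}^{\ast}\otimes V_{x}$ is $V_{x}^{\ast}\otimes V$, giving the stated decomposition ${\mathfrak O}^{VHS_{\C}}(x)\cong \bigoplus_{V} V^{\ast}\otimes V_{x}$. Here $V^{\ast}$ denotes the local system dual to $V$ and $V_{x}$ its constant fiber.

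For (2), each $V\in {\mathcal V}^{s}_{VHS_{\C}}(M)$ carries its unique polarizable $\C$-VHS structure by Proposition \ref{uniq}, so its dual $V^{\ast}$ carries the dual $\C$-VHS, and the fiber $V_{x}$ carries the $\C$-HS obtained by restricting at $x$. The external tensor product of a $\C$-VHS with a constant $\C$-HS is again a $\C$-VHS, and an (appropriately convergent) direct sum of objects of $\mathcal{VHS}_{\C}(M)$ is what we mean by a $\C$-VHS in the infinite-dimensional sense introduced in Section \ref{CVHSssec}; I would assemble these termwise. The dependence on $x$ enters only through the choice of basepoint fiber $V_{x}$, matching the claim. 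For (3), the algebra structure is simply the transport of the Hopf-algebra multiplication on ${\mathcal O}(\varpi_{1}^{VHS_{\C}}(M,x))$: the text already notes that under the displayed isomorphism the product corresponds to the tensor product in the tensor category $Fl_{VHS_{\C}}(M)$. Concretely, the multiplication sends the $(V,W)$-summand into the summands indexed by the simple constituents of $V\otimes W$, using the canonical projections $V_{x}\otimes W_{x}\to (V\otimes W)_{x}$ and the decomposition of $V\otimes W$ into simples; this is manifestly compatible with the $\pi_{1}(M,x)$-action, so it is a morphism of local systems of $\C$-algebras.

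The substance of the proposition is (4): that this multiplication is a morphism of $\C$-VHSs, i.e. strictly compatible with the bigradings of weight-$0$ Hodge structures on fibers (equivalently, of type $(0,0)$). The cleanest route is to recall from the preceding discussion that the $\C^{\ast}$-action on $\varpi_{1}^{VHS_{\C}}(M,x)$ coming from the Higgs deformations $(E,\theta)\mapsto(E,t\theta)$ acts on ${\mathcal O}(\varpi_{1}^{VHS_{\C}}(M,x))$ as \emph{Hopf-algebra} automorphisms, and that this $\C^{\ast}$-action is precisely what encodes the multiplicative $\C$-HS identified in the text. Since the $\C^{\ast}$-action preserves the comultiplication, antipode, \emph{and} the multiplication of the Hopf algebra, the multiplication map is $\C^{\ast}$-equivariant; translating $\C^{\ast}$-equivariance into the Hodge-theoretic language of Section 2 (where $\mathcal{HS}_{\C}\simeq {\rm Rep}(\C^{\ast})$) says exactly that the product is a morphism of $\C$-HSs on every fiber, and since the $\C$-VHS structures on the summands are determined fiberwise by the same $\C^{\ast}$-action, it is a morphism of $\C$-VHSs globally.

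The main obstacle I anticipate is bookkeeping with the two commuting $\varpi_{1}^{VHS_{\C}}(M,x)$-actions (left versus right) in part (1) and then ensuring that the Hodge-structure conventions on $V^{\ast}$ and on the fiber $V_{x}$ are normalized so that each summand $V^{\ast}\otimes V_{x}$ lands in weight $0$ and the multiplication genuinely sits in type $(0,0)$; weight-$0$ is what makes the tensor product stay within $\mathcal{HS}_{\C}$, and a sign or index slip there would corrupt the type of the product. Apart from that indexing, every step is a transport of a structure that $V$ and $V^{\ast}$ already possess, so no further analytic input beyond Proposition \ref{uniq} and the Hopf-algebraic $\C^{\ast}$-action is needed.
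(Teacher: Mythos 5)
Your parts (1)--(3) follow essentially the paper's own route: the paper deduces (1) from the $\varpi_{1}^{VHS_{\C}}(M,x)\times\varpi_{1}^{VHS_{\C}}(M,x)$-module decomposition of ${\mathcal O}(\varpi_{1}^{VHS_{\C}}(M,x))$, gets (2) as an immediate consequence, and gets (3) from equivariance of the Hopf multiplication under the action defining the local system. One bookkeeping slip in your (1): if, as you assert, the action defining the local system sits on the factor $V_{x}$ (the ``monodromy'' factor), then the summand becomes $V_{x}^{\ast}\otimes V$ (constant space tensor local system), which is \emph{not} literally $V^{\ast}\otimes V_{x}$; to reach the stated form you need either the opposite convention (left translation acting through $V_{x}^{\ast}$, which is what makes the dual local system $V^{\ast}$ appear) or a reindexing $V\mapsto V^{\ast}$ of the direct sum, legitimate because ${\mathcal V}^{s}_{VHS_{\C}}(M)$ is closed under duality, but this should be said rather than elided.

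The genuine gap is in (4), which is the substantive claim. The $\C^{\ast}$-equivariance of the Hopf multiplication gives exactly what the paper also starts from: the product ${\mathcal O}(\varpi_{1}^{VHS_{\C}}(M,x))\otimes {\mathcal O}(\varpi_{1}^{VHS_{\C}}(M,x))\to {\mathcal O}(\varpi_{1}^{VHS_{\C}}(M,x))$ is a morphism of $\C$-HSs \emph{on the fiber at $x$}. Your promotion of this to ``a morphism of $\C$-HSs on every fiber,'' justified by the phrase ``the $\C$-VHS structures on the summands are determined fiberwise by the same $\C^{\ast}$-action,'' is not an argument: the $\C^{\ast}$-action exists only on the Hopf algebra, i.e.\ on the fiber at the basepoint $x$; at a point $y\neq x$ the bigrading of the summand $V^{\ast}\otimes V_{x}$ is the bigrading of the variation $V^{\ast}$ at $y$, about which that action says nothing directly. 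What is needed --- and what the paper invokes at precisely this point via Remark \ref{remmor} --- is the rigidity statement
\[{\mathcal H}^{0}(M,{\rm Hom}(E_{1},E_{2}))^{0,0}\cong {\rm Hom}_{\pi_{1}(M,x)}(E_{1x},E_{2x})^{0,0},\]
i.e.\ a flat morphism of polarizable $\C$-VHSs whose value at one point is of type $(0,0)$ is of type $(0,0)$ at every point, hence a morphism of $\C$-VHSs. This rests on the K\"ahler identities of Subsection 3.2 (flat sections have flat Hodge components, ${\mathcal H}^{0}(M,E)^{P,Q}=\ker D_{\vert A^{0}(M,E)^{P,Q}}$, together with unique continuation of flat sections); it is exactly the compact-K\"ahler input your proof omits, and without it the step fails --- flatness plus a pointwise condition at $x$ alone does not control the Hodge type elsewhere. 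Once you cite that remark (applied to ${\rm Hom}({\mathfrak O}^{VHS_{\C}}(x)\otimes{\mathfrak O}^{VHS_{\C}}(x),{\mathfrak O}^{VHS_{\C}}(x))$, which is unproblematic summand by summand since each $V\otimes W$ is finite-dimensional), your argument closes and coincides with the paper's.
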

\begin{proof}
(1) is a consequence of 
${\mathfrak O}^{VHS_{\C}}_{x}\cong \bigoplus_{V\in  {\mathcal V}^{s}_{VHS_{\C}}(M)} V^{\ast}\otimes V_{x}$.

(2) follows from (1).

Since the algebra structure ${\mathcal O}(\varpi_{1}^{VHS_{\C}}(M,x))\otimes {\mathcal O}(\varpi_{1}^{VHS_{\C}}(M,x))\to {\mathcal O}(\varpi_{1}^{VHS_{\C}}(M,x))$ on  ${\mathcal O}(\varpi_{1}^{VHS_{\C}}(M,x))$ is equivariant for the left $\varpi_{1}^{VHS_{\C}}(M,x)$-action, this gives (3).

We have seen that  the algebra structure ${\mathcal O}(\varpi_{1}^{VHS_{\C}}(M,x))\otimes {\mathcal O}(\varpi_{1}^{VHS_{\C}}(M,x))\to {\mathcal O}(\varpi_{1}^{VHS_{\C}}(M,x))$ is a $\C$-HS morphism.
  Since  we have  an isomorphism  between
\[{\mathcal H}^{0}( M, Hom ({\mathfrak O}^{VHS_{\C}}_{x}\otimes  {\mathfrak O}^{VHS_{\C}}_{x},  {\mathfrak O}^{VHS_{\C}}_{x}))^{0,0}\]
and\[ Hom_{\pi_{1}(M,x)} ( {\mathcal O}(\varpi_{1}^{VHS_{\C}}(M,x))\otimes  {\mathcal O}(\varpi_{1}^{VHS_{\C}}(M,x)),  {\mathcal O}(\varpi_{1}^{VHS_{\C}}(M,x)))^{0,0}
\](Remark \ref{remmor}),
(4) follows.

\end{proof}

\begin{prop}\label{modul}
 ${\mathfrak O}^{VHS_{\C}}(x)$ admits a structure $\varpi_{1}^{VHS_{\C}}(M,x)\times {\mathfrak O}^{VHS_{\C}}(x)\to {\mathfrak O}^{VHS_{\C}}(x)$ of a local system of rational $\varpi_{1}^{VHS_{\C}}(M,x)$-modules so that:
 \begin{enumerate}
 \item For any $(U,F^{\ast}, G^{\ast})\in {\rm Obj}(Rep^{HS}(\varpi_{1}^{VHS_{\C}}(M,x)))$, we have  $({\mathfrak O}^{VHS_{\C}}(x)\otimes U)^{\varpi_{1}^{VHS_{\C}}(M,x)}\in \mathcal{ VHS}_{\C}(M)$.
 \item The correspondence $(U,F^{\ast}, G^{\ast})\mapsto ({\mathfrak O}^{VHS_{\C}}(x)\otimes U)^{\varpi_{1}^{VHS_{\C}}(M,x)}$ is an equivalence $\mathcal{ VHS}_{\C}(M)\cong {\rm Rep}^{HS}(\varpi_{1}^{VHS_{\C}}(M,x))$.
 \item For any  $( E, D, F^{\ast},  G^{\ast})\in {\rm Obj}(\mathcal{ VHS}_{\C}(M))$, 
 $({\mathfrak O}^{VHS_{\C}}(x)\otimes E_{x})^{\varpi_{1}^{VHS_{\C}}(M,x)}\in {\rm Obj}(\mathcal{ VHS}_{\C}(M))$ is isomorphic to $( E, D, F^{\ast},  G^{\ast})$.
 \end{enumerate}
\end{prop}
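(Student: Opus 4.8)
The plan is to produce the module structure from the \emph{right} regular action and then to recognize the functor $U\mapsto ({\mathfrak O}^{VHS_{\C}}(x)\otimes U)^{\varpi_{1}^{VHS_{\C}}(M,x)}$ as the concrete realization of the quasi-inverse already constructed in Theorem \ref{EQHo}. Write $G=\varpi_{1}^{VHS_{\C}}(M,x)$ for brevity. The Hopf algebra ${\mathcal O}(G)$ carries two commuting $G$-actions, the left and right regular actions; by definition ${\mathfrak O}^{VHS_{\C}}(x)$ is the local system whose monodromy is the restriction to $\pi_{1}(M,x)$ of the \emph{left} action. Since the right action commutes with the left one, it commutes with the monodromy and therefore descends to automorphisms of the local system ${\mathfrak O}^{VHS_{\C}}(x)$; this is the asserted rational $G$-module structure. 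Because the multiplicative $\C$-HS on ${\mathcal O}(G)$ is induced by a $\C^{\ast}$-action by Hopf-algebra automorphisms that commutes with both regular actions, the right action preserves the Hodge grading, so it is a module structure in the required sense.

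First I would prove (1). Under the decomposition ${\mathfrak O}^{VHS_{\C}}(x)\cong\bigoplus_{V\in{\mathcal V}^{s}_{VHS_{\C}}(M)}V^{\ast}\otimes V_{x}$, the monodromy acts on the local-system factor $V^{\ast}$ while the right action acts on the constant fiber $V_{x}$. Since the two actions commute, taking invariants with respect to the diagonal of the right action and the action on $U$ passes through the sum,
\[({\mathfrak O}^{VHS_{\C}}(x)\otimes U)^{G}\cong\bigoplus_{V\in{\mathcal V}^{s}_{VHS_{\C}}(M)}V^{\ast}\otimes(V_{x}\otimes U)^{G}.\]
Each multiplicity space $(V_{x}\otimes U)^{G}$ is a sub-$\C$-HS of the $\C$-HS $V_{x}\otimes U$, because the $G$-action commutes with the Hodge ($\C^{\ast}$-)action; and only finitely many $V$ contribute, since $U$ is finite-dimensional. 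Hence the right-hand side is a finite direct sum of the $\C$-VHSs $V^{\ast}$ tensored with constant $\C$-HSs, and so lies in $\mathcal{VHS}_{\C}(M)$. This gives (1) and exhibits the output as a finite-dimensional $\C$-VHS.

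Next, for (2) and (3), I would identify this functor with the quasi-inverse of Theorem \ref{EQHo}. Rewriting $(V_{x}\otimes U)^{G}\cong{\rm Hom}_{G}(V_{x}^{\ast},U)={\rm Hom}_{\pi_{1}(M,x)}(V_{x}^{\ast},U)$ (the $\pi_{1}$-image being Zariski dense in $G$) and using that ${\mathcal V}^{s}_{VHS_{\C}}(M)$ is closed under duality, the term indexed by $V$ is exactly the term indexed by $V^{\ast}$ in $\bigoplus_{W}{\rm Hom}_{\pi_{1}(M,x)}(W_{x},U)\otimes W$. Thus $U\mapsto({\mathfrak O}^{VHS_{\C}}(x)\otimes U)^{G}$ coincides with the quasi-inverse built in the proof of Theorem \ref{EQHo}, which proves (2). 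For (3) I apply this functor to $U=E_{x}$: by (2) together with Theorem \ref{EQHo} it returns $E$. Equivalently, passing to the fiber at $x$ one has ${\mathfrak O}^{VHS_{\C}}_{x}\cong{\mathcal O}(G)$, and the regular-representation isomorphism $({\mathcal O}(G)\otimes E_{x})^{G}\cong E_{x}$ recalled above (with the roles of the two regular actions interchanged) recovers the fiber $E_{x}$ together with the residual left action, i.e. the monodromy $\rho_{E}$ and the fiberwise Hodge filtration, identifying the result with $(E,D,F^{\ast},G^{\ast})$.

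The main obstacle I expect is the compatibility bookkeeping of the first paragraph: checking that the right regular action genuinely commutes with the monodromy (so that the invariants form a sub-\emph{local system}, not merely a fiberwise subspace) and that it respects the multiplicative Hodge structure, so that taking $G$-invariants is simultaneously compatible with the flat and the Hodge data. Once this is in place, statements (1)--(3) reduce to the decomposition of ${\mathfrak O}^{VHS_{\C}}(x)$, the elementary fact that invariants of a $\C$-HS form a sub-$\C$-HS, and the already-established regular-representation isomorphism together with Theorem \ref{EQHo}, so the remaining work is essentially formal.
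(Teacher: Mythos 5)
Your proposal takes the same route as the paper: the module structure is the right regular action on ${\mathcal O}(\varpi_{1}^{VHS_{\C}}(M,x))$, which descends to the local system because it commutes with the left action defining the monodromy, and the functor $U\mapsto ({\mathfrak O}^{VHS_{\C}}(x)\otimes U)^{\varpi_{1}^{VHS_{\C}}(M,x)}$ is identified, via the decomposition ${\mathfrak O}^{VHS_{\C}}(x)\cong \bigoplus_{V} V^{\ast}\otimes V_{x}$, with the quasi-inverse constructed in the proof of Theorem \ref{EQHo}, from which (1)--(3) follow. This is exactly the paper's argument, and your explicit duality re-indexing $W=V^{\ast}$ correctly fills in the isomorphism $({\mathfrak O}^{VHS_{\C}}(x)\otimes U)^{\varpi_{1}^{VHS_{\C}}(M,x)}\cong\bigoplus_{V}{\rm Hom}_{\pi_{1}(M,x)}(V_{x},U)\otimes V$ that the paper states without comment.

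There is, however, one false claim in your compatibility bookkeeping, precisely at the point you yourself single out as the main obstacle. The $\C^{\ast}$-action giving the Hodge structure on ${\mathcal O}(\varpi_{1}^{VHS_{\C}}(M,x))$ is induced by a $\C^{\ast}$-action on the group by automorphisms $\sigma_{t}$; such an action does \emph{not} commute with the regular actions, it intertwines them, $t\cdot(h\cdot\phi)=\sigma_{t}(h)\cdot(t\cdot\phi)$. Consequently the right regular action does \emph{not} preserve the Hodge bigrading: under ${\mathcal O}\cong\bigoplus_{V} V_{x}^{\ast}\otimes V_{x}$ the right action is the (extension to the group of the) monodromy representation on the factors $V_{x}$, and monodromy does not preserve the fiberwise bigrading of a variation unless the Hodge decomposition is flat. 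The same false premise underlies your assertion that $(V_{x}\otimes U)^{\varpi_{1}^{VHS_{\C}}(M,x)}$ is a sub-$\C$-HS ``because the $G$-action commutes with the Hodge action.'' The conclusions you need are nevertheless true, and the repair stays inside the paper's framework: first, the proposition only asks for a local system of rational modules, so no Hodge compatibility of the right action needs to be proved at all; second, $V_{x}\otimes U$ is an object of ${\rm Rep}^{HS}(\varpi_{1}^{VHS_{\C}}(M,x))$ (a tensor product of Hodge representations, using that the multiplication of ${\mathcal O}$ is a morphism of $\C$-HSs), and its invariant subspace is the kernel of the morphism of $\C$-HSs $w\mapsto \rho(w)-w\otimes 1$, hence a sub-$\C$-HS because $\mathcal{HS}_{\C}$ is abelian with strict morphisms. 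Alternatively, as in the paper, once your functor is identified with the quasi-inverse of Theorem \ref{EQHo}, all Hodge-theoretic assertions in (1)--(3) are inherited from that theorem and need no separate verification.
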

\begin{proof}
Since the right $\varpi_{1}^{VHS_{\C}}(M,x)$-action on ${\mathcal O}(\varpi_{1}^{VHS_{\C}}(M,x))$ commutes with the left action, the rational $\varpi_{1}^{VHS_{\C}}(M,x)$-module structure on ${\mathcal O}(\varpi_{1}^{VHS_{\C}}(M,x))$ via the right action defines  structure $\varpi_{1}^{VHS_{\C}}(M,x)\times {\mathfrak O}^{VHS_{\C}}(x)\to {\mathfrak O}^{VHS_{\C}}(x)$ of a local system of rational $\varpi_{1}^{VHS_{\C}}(M,x)$-modules.

By ${\mathfrak O}^{VHS_{\C}}(x)\cong \bigoplus_{V\in  {\mathcal V}^{s}_{VHS_{\C}}(M)} V^{\ast}\otimes V_{x}$, for a rational $\varpi_{1}^{VHS_{\C}}(M,x)$-module $U$, we have \[ ({\mathfrak O}^{VHS_{\C}}(x)\otimes U)^{\varpi_{1}^{VHS_{\C}}(M,x)}\cong 
\bigoplus_{V\in {\mathcal V}^{s}_{VHS_{\C}}(M)}{\rm Hom}_{\pi_{1}(M,x)}(V_{x}, U)\otimes V.
\]
Thus, the correspondence $(U,F^{\ast}, G^{\ast})\mapsto ({\mathfrak O}^{VHS_{\C}}(x)\otimes U)^{\varpi_{1}^{VHS_{\C}}(M,x)}$ is the  functor  giving Theorem \ref{EQHo}.
Hence, (1),  (2) and (3) are obvious.
\end{proof}

\begin{thm}\label{deRH}
Consider the de Rham  complex 
\[A^{\ast}(M, {\mathfrak O}^{VHS_{\C}}(x))\]
with values in the local system $ {\mathfrak O}^{VHS_{\C}}(x)$.
Then:
\begin{enumerate}
\item The cochain complex  $A^{\ast}(M, {\mathfrak O}^{VHS_{\C}}(x))$ admits a canonical  differential graded algebra structure.
\item The differential graded algebra   $A^{\ast}(M, {\mathfrak O}^{VHS_{\C}}(x))$ admits a canonical  bidifferential  bigraded algebra structure.
\item $A^{\ast}(M, {\mathfrak O}^{VHS_{\C}}(x))$ is a rational $\varpi_{1}^{VHS_{\C}}(M,x)$-module such that  $\varpi_{1}^{VHS_{\C}}(M,x)$ acts as automorphisms of the  differential graded algebra.
\item The co-module structure $A^{\ast}(M, {\mathfrak O}^{VHS_{\C}}(x))\to  A^{\ast}(M, {\mathfrak O}^{VHS_{\C}}(x))\otimes {\mathcal O}(\varpi_{1}^{VHS_{\C}}(M,x))$ is a morphism of bidifferential  bigraded algebras.
\item For $(U,F^{\ast}, G^{\ast})\in {\rm Obj}({\rm Rep}^{HS}(\varpi_{1}^{VHS_{\C}}(M,x)))$, 
\[(A^{\ast}(M, {\mathfrak O}^{VHS_{\C}}(x))\otimes U)^{\varpi_{1}^{VHS_{\C}}(M,x)}
\]
is a double complex.
\item For $( E, D, F^{\ast},  G^{\ast})\in {\rm Obj}(\mathcal{ VHS}_{\C}(M))$, the double complex $(A^{\ast}(M, {\mathfrak O}^{VHS_{\C}}(x))\otimes E_{x})^{\varpi_{1}^{VHS_{\C}}(M,x)}$ is isomorphic to the double complex 
\[(A^{\ast}(M,E)^{p,q}, D^{\prime},D^{\prime\prime})\]
as  Subsection \ref{CVHSssec}.
\end{enumerate}

\end{thm}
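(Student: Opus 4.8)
The strategy is to reduce everything to the decomposition $\mathfrak{O}^{VHS_{\C}}(x)\cong \bigoplus_{V} V^{\ast}\otimes V_{x}$ and to exploit that each summand carries a genuine $\C$-VHS structure, so that all the de Rham, algebra, and Hodge structures assemble from the pieces already understood in Subsection \ref{CVHSssec}. I will treat the six assertions in order, since each rests on the previous ones.

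For (1) and (2): the product structure on the local system $\mathfrak{O}^{VHS_{\C}}(x)$ of $\C$-algebras (from the preceding proposition) induces a wedge-times-multiply product on $A^{\ast}(M,\mathfrak{O}^{VHS_{\C}}(x))$, namely $(\alpha\otimes s)\wedge(\beta\otimes t)=(\alpha\wedge\beta)\otimes(st)$. I would first check this is a graded-commutative associative product and that the flat connection $D$ acts as a graded derivation, using that $D$ is flat and that the multiplication map is $D$-parallel (it is a morphism of local systems). This gives (1). For (2), the $\C$-VHS structure on $\mathfrak{O}^{VHS_{\C}}(x)$ yields the bigrading $A^{\ast}(M,\mathfrak{O}^{VHS_{\C}}(x))^{p,q}$ and the operators $D^{\prime}, D^{\prime\prime}$ exactly as in Subsection \ref{CVHSssec}; since the algebra structure is a morphism of $\C$-VHSs (part (4) of the preceding proposition), the product respects the bigrading, and $D^{\prime}, D^{\prime\prime}$ are bihomogeneous derivations. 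This is the content of being a bidifferential bigraded algebra.

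For (3) and (4): the right $\varpi_{1}^{VHS_{\C}}(M,x)$-action on $\mathcal{O}(\varpi_{1}^{VHS_{\C}}(M,x))$, which by Proposition \ref{modul} gives $\mathfrak{O}^{VHS_{\C}}(x)$ its rational module structure, commutes with the left action (hence with $D$, the product, and the bigrading, all of which are built from the left structure). So the functoriality is essentially formal: I would verify that the action is by algebra automorphisms compatible with $D^{\prime},D^{\prime\prime}$, and that the associated co-module map is a morphism of bidifferential bigraded algebras because it is induced fiberwise from the co-multiplication, which is itself a $\C$-HS algebra morphism.

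The heart of the theorem is (5) and (6), and this is where I expect the main obstacle. For (5), I would use the module structure on $\mathfrak{O}^{VHS_{\C}}(x)$ together with the computation $(\mathfrak{O}^{VHS_{\C}}(x)\otimes U)^{\varpi_{1}^{VHS_{\C}}(M,x)}\cong\bigoplus_{V}\mathrm{Hom}_{\pi_{1}(M,x)}(V_{x},U)\otimes V$ from Proposition \ref{modul}; taking invariants commutes with $D^{\prime}$ and $D^{\prime\prime}$ because the action is by automorphisms of the bidifferential bigraded algebra, so the invariant subspace inherits a double complex. The delicate point for (6) is to identify this invariant double complex, for $U=E_{x}$, with the intrinsic double complex $(A^{\ast}(M,E)^{p,q},D^{\prime},D^{\prime\prime})$ of $(E,D,F^{\ast},G^{\ast})$. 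Here I would use the isomorphism $(\mathfrak{O}^{VHS_{\C}}(x)\otimes E_{x})^{\varpi_{1}^{VHS_{\C}}(M,x)}\cong E$ as a $\C$-VHS (Proposition \ref{modul}(3)), and the key is that this isomorphism is compatible with the decompositions $D=\partial+\theta+\bar\partial+\bar\theta$ on both sides, so that $D^{\prime},D^{\prime\prime}$ match under it. The real work is checking that the bigrading on $A^{\ast}(M,\mathfrak{O}^{VHS_{\C}}(x))\otimes E_{x}$ restricts on invariants to the bigrading $A^{\ast}(M,E)^{p,q}$ rather than merely to an abstractly isomorphic one, which amounts to tracing the $\C$-HS on each $V_{x}$ through the isomorphism of Theorem \ref{EQHo} and confirming it agrees with the fiberwise Hodge decomposition of $E$. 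This bookkeeping of the Hodge weights is routine in principle but is the step most prone to sign and index errors.
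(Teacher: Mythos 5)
Your treatment of (1), (2), and (6) is essentially the paper's proof: wedge product combined with the algebra structure on ${\mathfrak O}^{VHS_{\C}}(x)$ for (1); the bigrading of Subsection \ref{CVHSssec} plus the fact that the multiplication is a morphism of $\C$-VHSs for (2); and the decomposition of $E$ from Proposition \ref{semva} (equivalently Proposition \ref{modul}) for (6). However, there is a genuine gap in your argument for (5), which also appears in your paragraph on (3)--(4): you assert that the right $\varpi_{1}^{VHS_{\C}}(M,x)$-action ``commutes with \dots the bigrading'' and later that ``the action is by automorphisms of the bidifferential bigraded algebra.'' This is false, and it is precisely why statement (3) of the theorem claims only \emph{differential graded algebra} automorphisms. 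Under ${\mathfrak O}^{VHS_{\C}}(x)\cong\bigoplus_{V}V^{\ast}\otimes V_{x}$, the right action of an element $g$ acts through $\rho_{V}(g)$ on the factors $V_{x}$, and the bigrading uses the Hodge decomposition $V_{x}=\bigoplus V_{x}^{b,d}$ of exactly these factors; so the bigrading is \emph{not} built only from the left structure. A general element of $\varpi_{1}^{VHS_{\C}}(M,x)$ does not preserve that decomposition: if every element did, then each $V_{x}^{b,d}$ would be a $\pi_{1}(M,x)$-submodule, simplicity of $V$ would force $V_{x}$ to be concentrated in a single bidegree, and the Higgs field of every simple $\C$-VHS would vanish --- false for any nontrivial variation. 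Hence you cannot conclude that taking invariants commutes with $D^{\prime}$ and $D^{\prime\prime}$ by claiming each group element preserves them.

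The correct mechanism --- and the paper's proof of (5) --- is that compatibility with the bigrading holds at the level of the \emph{co-action}, not of group elements. Statement (4) says the co-module map $A^{\ast}(M,{\mathfrak O}^{VHS_{\C}}(x))\to A^{\ast}(M,{\mathfrak O}^{VHS_{\C}}(x))\otimes{\mathcal O}(\varpi_{1}^{VHS_{\C}}(M,x))$ is a morphism of bidifferential bigraded algebras, where the target carries the tensor-product bigrading built from the nontrivial Hodge structure on ${\mathcal O}(\varpi_{1}^{VHS_{\C}}(M,x))$ itself. The invariant subspace $(A^{\ast}(M,{\mathfrak O}^{VHS_{\C}}(x))\otimes U)^{\varpi_{1}^{VHS_{\C}}(M,x)}$ is the kernel of $\omega\mapsto\rho()\omega-\omega$, which by (4) (together with the hypothesis that $U$ is a Hodge representation) is a morphism of double complexes; therefore its kernel is a sub-double complex. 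This is the same phenomenon as $E_{x}^{\pi_{1}(M,x)}$ being a sub-Hodge structure of $E_{x}$ in Remark \ref{remmor}: invariants inherit the bigrading even though individual group elements do not preserve it. Your alternative route via the identification $({\mathfrak O}^{VHS_{\C}}(x)\otimes U)^{\varpi_{1}^{VHS_{\C}}(M,x)}\cong\bigoplus_{V}{\rm Hom}_{\pi_{1}(M,x)}(V_{x},U)\otimes V$ could be repaired to give (5), but only by additionally checking that this isomorphism identifies the bigrading induced from the ambient space with the natural one on the right-hand side --- which is exactly the point your appeal to ``bigraded automorphisms'' attempted to skip.
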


\begin{proof}
 $A^{\ast}(M, {\mathfrak O}^{VHS_{\C}}(x))$ is a differential graded algebra such that the multiplications are defined  by the wedge product and the algebra structure on  $ {\mathfrak O}^{VHS_{\C}}(x)$.
 (1) follows.
 
We define the double complex structure on $A^{\ast}(M, {\mathfrak O}^{VHS_{\C}}(x))$ defined  by the complex structure on $M$ and the  $\C$-VHS structure on ${\mathfrak O}^{VHS_{\C}}(x)$ as Subsection \ref{CVHSssec}.
Then, since ${\mathfrak O}^{VHS_{\C}}(x)\otimes  {\mathfrak O}^{VHS_{\C}}(x)\to  {\mathfrak O}^{VHS_{\C}}(x)$ is a  morphism of $\C$-VHSs,  the product on  $A^{\ast}(M, {\mathfrak O}^{VHS_{\C}}_{x})$ is compatible with the bigrading.
Hence,  we have the bidifferential  bigraded algebra structure on $A^{\ast}(M, {\mathfrak O}^{VHS_{\C}}(x))$.
(2) follows.

The  structure $\varpi_{1}^{VHS_{\C}}(M,x)\times {\mathfrak O}^{VHS_{\C}}(x)\to {\mathfrak O}^{VHS_{\C}}(x)$ of a local system of rational $\varpi_{1}^{VHS_{\C}}(M,x)$-modules defines a   rational $\varpi_{1}^{VHS_{\C}}(M,x)$-module structure on   $A^{\ast}(M, {\mathfrak O}^{VHS_{\C}}(x))$ such that the $\varpi_{1}^{VHS_{\C}}(M,x)$-action commutes with the differential on $A^{\ast}(M, {\mathfrak O}^{VHS_{\C}}(x))$.
Since the algebra structure ${\mathcal O}(\varpi_{1}^{VHS_{\C}}(M,x))\otimes {\mathcal O}(\varpi_{1}^{VHS_{\C}}(M,x))\to {\mathcal O}(\varpi_{1}^{VHS_{\C}}(M,x))$ on  ${\mathcal O}(\varpi_{1}^{VHS_{\C}}(M,x))$ is equivariant for the right $\varpi_{1}^{VHS_{\C}}(M,x)$-action, the $\varpi_{1}^{VHS_{\C}}(M,x)$-action on $A^{\ast}(M, {\mathfrak O}^{VHS_{\C}}(x))$ is compatible with the multiplication on $A^{\ast}(M, {\mathfrak O}^{VHS_{\C}}(x))$.
Thus (3) follows.

By ${\mathfrak O}^{VHS_{\C}}(x)\cong \bigoplus_{V\in  {\mathcal V}^{s}_{VHS_{\C}}(M)} V^{\ast}\otimes V_{x}$,
the $(p,q)$-component of the bigrading on $A^{\ast}(M, {\mathfrak O}^{VHS_{\C}}(x))$ is given by 
\[\bigoplus_{V\in  {\mathcal V}^{s}_{VHS_{\C}}(M)} \bigoplus_{a+b=p,c+d=q} A^{\ast}(M,V^{\ast})^{a,c}\otimes V^{b,d}_{x}.
\]
We can easily check (4).

The subspace  $(A^{\ast}(M, {\mathfrak O}^{VHS_{\C}}(x))\otimes U)^{\varpi_{1}^{VHS_{\C}}(M,x)} \subset A^{\ast}(M, {\mathfrak O}^{VHS_{\C}}(x))\otimes U
$
is the kernel of the map \[A^{\ast}(M, {\mathfrak O}^{VHS_{\C}}(x))\otimes U\ni \omega\mapsto \rho()\omega-\omega\in   A^{\ast}(M, {\mathfrak O}^{VHS_{\C}}(x))\otimes U\otimes {\mathcal O}(\varpi_{1}^{VHS_{\C}}(M,x))\]
where $ \omega\mapsto \rho()\omega$ is the co-module structure of ${\mathcal O}(\varpi_{1}^{VHS_{\C}}(M,x))$ on $A^{\ast}(M, {\mathfrak O}^{VHS_{\C}}(x))\otimes U$.
Thus, (5) follows from (4).

By Proposition \ref{semva},   we can identify  $(E,D, F^{\ast},  G^{\ast}) \in {\rm Obj}(\mathcal{VHS}_{\C}(M))$ with 
\[\bigoplus_{V\in {\mathcal V}^{s}_{VHS_{\C}}(M)} {\mathcal H}^{0}(M, {\rm Hom}(V,E))\otimes V.
\]
We have 
\[ A^{\ast}(M,E)^{p,q}=\bigoplus_{V\in  {\mathcal V}^{s}_{VHS_{\C}}(M)} \bigoplus_{a+b=p,c+d=q} A^{\ast}(M,V)^{a,c}\otimes {\mathcal H}^{0}(M, {\rm Hom}(V,E))^{c,d}.
\]
For 
\[E_{x}=\bigoplus_{V\in {\mathcal V}^{s}_{VHS_{\C}}(M)} {\mathcal H}^{0}(M, {\rm Hom}(V,E))\otimes V_{x},
\]
we can directly check (6). 

\end{proof}

By Theorem \ref{FUnho}, we have:
\begin{cor}\label{CCHo}
\begin{enumerate}
\item
The cohomology $H^{\ast}(M, {\mathfrak O}^{VHS_{\C}}(x))$ with values in the local system ${\mathfrak O}^{VHS_{\C}}(x)$  admits a canonical structure $H^{\ast}(M, {\mathfrak O}^{VHS_{\C}}(x))\otimes H^{\ast}(M, {\mathfrak O}^{VHS_{\C}}(x))\to H^{\ast}(M, {\mathfrak O}^{VHS_{\C}}(x))$ of a graded-commutative  $\C$-algebra.
\item $H^{i}(M, {\mathfrak O}^{VHS_{\C}}(x))$ admits  a canonical complex Hodge structure of weight $i$.
\item The multiplication $H^{i}(M, {\mathfrak O}^{VHS_{\C}}(x))\otimes H^{j}(M, {\mathfrak O}^{VHS_{\C}}(x))\to H^{i+j}(M, {\mathfrak O}^{VHS_{\C}}(x))$ is a  morphism of $\C$-HSs.

\item $H^{\ast}(M, {\mathfrak O}^{VHS_{\C}}(x))$ is the direct sum of   objects in ${\rm Rep}^{HS}(\varpi_{1}^{VHS_{\C}}(M,x))$.
\item For $( E, D, F^{\ast},  G^{\ast})\in {\rm Obj}(\mathcal{ VHS}_{\C}(M))$,   we have a $\C$-HS isomorphism  
\[(H^{i}(M, {\mathfrak O}^{VHS_{\C}}(x))\otimes E_{x})^{\varpi_{1}^{VHS_{\C}}(M,x)}\cong H^{i}(M, E).\]

\item
For $( E_{1}, D_{1}, F^{\ast}_{1},  G^{\ast}_{1}), ( E_{2}, D_{2}, F^{\ast}_{2},  G^{\ast}_{2})\in {\rm Obj}(\mathcal{ VHS}_{\C}(M))$,
as a  morphism of $\C$-HSs, the cup product $H^{i}(M, E_{1})\otimes H^{\ast}(M, E_{2})\to H^{j}(M, E_{1}\otimes E_{2})$ is identified with the restriction of the tensor product of the multiplication
\[H^{i}(M, {\mathfrak O}^{VHS_{\C}}(x))\otimes H^{j}(M, {\mathfrak O}^{VHS_{\C}}(x))\to H^{i+j}(M, {\mathfrak O}^{VHS_{\C}}(x))
\]
and the identity 
\[E_{1x}\otimes E_{2x}\to E_{1x}\otimes E_{2x}
\]
by isomorphisms 
\[(H^{\ast}(M, {\mathfrak O}^{VHS_{\C}}(x))\otimes E_{kx})^{\varpi_{1}^{VHS_{\C}}(M,x)}\cong H^{\ast}(M, E_{k})\]
for $k=1,2$.

\end{enumerate}

\end{cor}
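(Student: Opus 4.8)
The plan is to transfer the algebraic and Hodge-theoretic structures established in Theorem \ref{deRH} from the de Rham complex to its cohomology, reducing every assertion to the finite-dimensional situation governed by Theorem \ref{FUnho} by means of the decomposition ${\mathfrak O}^{VHS_{\C}}(x)\cong \bigoplus_{V\in {\mathcal V}^{s}_{VHS_{\C}}(M)} V^{\ast}\otimes V_{x}$. First I would dispose of (1): since $A^{\ast}(M, {\mathfrak O}^{VHS_{\C}}(x))$ is a differential graded algebra by Theorem \ref{deRH}(1), its cohomology inherits a graded-commutative $\C$-algebra structure in the standard way.

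For (2) and (3) the delicate point is that ${\mathfrak O}^{VHS_{\C}}(x)$ is infinite-dimensional, so Theorem \ref{FUnho} does not apply directly. I would instead use the displayed decomposition: each summand $V^{\ast}\otimes V_{x}$ is a finite-dimensional polarizable $\C$-VHS of weight $0$, so Theorem \ref{FUnho} endows $H^{i}(M, V^{\ast}\otimes V_{x})$ with a canonical $\C$-HS of weight $i$, and taking the direct sum over $V$ yields the weight-$i$ Hodge structure of (2). Because the bidifferential bigraded algebra structure of Theorem \ref{deRH}(2) has product compatible with the bigrading and differentials $D^{\prime}, D^{\prime\prime}$ of pure bidegree, the induced product on cohomology respects the Hodge bigrading; this is exactly (3).

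The step demanding the most care is (5). Here I would apply cohomology to the isomorphism of double complexes in Theorem \ref{deRH}(6). The essential input is that $\varpi_{1}^{VHS_{\C}}(M,x)$ is reductive, being a quotient of the pro-reductive group $\varpi_{1}^{red}(M,x)$; hence, over $\C$, the functor of taking $\varpi_{1}^{VHS_{\C}}(M,x)$-invariants is exact and commutes with passing to cohomology. Consequently the cohomology of $(A^{\ast}(M, {\mathfrak O}^{VHS_{\C}}(x))\otimes E_{x})^{\varpi_{1}^{VHS_{\C}}(M,x)}$ equals $(H^{i}(M, {\mathfrak O}^{VHS_{\C}}(x))\otimes E_{x})^{\varpi_{1}^{VHS_{\C}}(M,x)}$, and Theorem \ref{deRH}(6) identifies this with $H^{i}(M,E)$, while Theorem \ref{FUnho} matches the two $\C$-HSs summand by summand.

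Finally, (4) follows by combining (2) with the rational module and co-module structures of Theorem \ref{deRH}(3), (4), whose comodule map is a morphism of bidifferential bigraded algebras and hence induces a $\C$-HS morphism on cohomology; and (6) follows from (5) together with (1), (3) and Theorem \ref{deRH}(4) by restricting the multiplication of $H^{\ast}(M, {\mathfrak O}^{VHS_{\C}}(x))$ to the invariant parts. The main obstacle throughout is the passage through infinite dimensions, which I expect to resolve in two ways: once by the finite-dimensional direct-sum decomposition for the Hodge structures of (2), and once by reductivity of $\varpi_{1}^{VHS_{\C}}(M,x)$ for the exactness needed in (5).
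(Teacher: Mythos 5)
Your proposal is correct and follows essentially the same route as the paper, whose entire proof is the citation ``By Theorem \ref{FUnho}'' placed after Theorem \ref{deRH}: the intended argument is precisely your combination of the decomposition ${\mathfrak O}^{VHS_{\C}}(x)\cong \bigoplus_{V\in {\mathcal V}^{s}_{VHS_{\C}}(M)} V^{\ast}\otimes V_{x}$ into finite-dimensional polarizable $\C$-VHS summands (so that Theorem \ref{FUnho} applies) with the structures transferred from Theorem \ref{deRH}. Your explicit appeal to pro-reductivity of $\varpi_{1}^{VHS_{\C}}(M,x)$ (equivalently, semi-simplicity of $Fl_{VHS_{\C}}(M)$ from Proposition \ref{semva}) to make taking invariants commute with cohomology in part (5) is a detail the paper leaves implicit, and it is the right justification.
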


\begin{remark}
For  $(E, D, F^{\ast}, G^{\ast})\in {\rm Obj}({\mathcal VHS}_{\C}(M))$, we have the complex  Hodge structure of weight $i$ on the cohomology 
$H^{i}(M, {\mathfrak O}^{VHS_{\C}}(x)\otimes E)$.
By
\[E_{x}=\bigoplus_{V\in {\mathcal V}^{s}_{VHS_{\C}}(M)} {\mathcal H}^{0}(M, {\rm Hom}(V,E))\otimes V_{x},
\]
we have an isomorphism  $H^{0}(M, {\mathfrak O}^{VHS_{\C}}_{x}\otimes E)\cong (E_{x}, F^{\ast}_{x}, G^{\ast}_{x})$ of $\C$-HSs.
\end{remark}

\end{document}